\theoremstyle{thmstyleone}%
\newtheorem{theorem}{Theorem}
\newtheorem{lemma}{Lemma}
\newtheorem{corollary}{Corollary}
\theoremstyle{thmstyletwo}%
\theoremstyle{thmstylethree}%
\newtheorem{assumption}{Assumption}
\DeclareMathOperator*{\argmin}{arg\,min}
\begin{document}

\title[A federated Kaczmarz algorithm]{A federated Kaczmarz algorithm}


\author[1]{\fnm{Halyun} \sur{Jeong}}\email{hjeong2@albany.edu}

\author[2]{\fnm{Deanna} \sur{Needell}}\email{deanna@math.ucla.edu}

\author*[2]{\fnm{Chi-Hao} \sur{Wu}}\email{chwu93@math.ucla.edu}

\affil[1]{\orgdiv{Department of Mathematics \& Statistics}, \orgname{The State University of New York at Albany}, \orgaddress{\city{Albany}, \postcode{12222}, \state{NY}, \country{USA}}}

\affil*[2]{\orgdiv{Department of Mathematics}, \orgname{University of California Los Angeles}, \orgaddress{\city{Los Angeles}, \postcode{90095}, \state{CA}, \country{USA}}}



\abstract{In this paper, we propose a federated algorithm for solving large linear systems that is inspired by the classic randomized Kaczmarz algorithm. We provide convergence guarantees of the proposed method, and as a corollary of our analysis, we provide a new proof for the convergence of the classic randomized Kaczmarz method. 
We demonstrate experimentally the behavior of our method when applied to related problems. For underdetermined systems, we demonstrate that our algorithm can be used for sparse approximation. For inconsistent systems, we demonstrate that our algorithm converges to a horizon of the least squares solution. Finally, we apply our algorithm to real data and show that it is consistent with the selection of Lasso, while still offering the computational advantages of the Kaczmarz framework and thresholding-based algorithms in the federated setting.}

\keywords{federated learning, Kaczmarz method, sparse approximation, feature selection}


\pacs[MSC Classification]{65F10, 65F20}

\maketitle

\section{Introduction}\label{introduction}
In this paper, we propose a federated algorithm for solving large linear systems. Federated learning was originally proposed by \cite{mcmahan2017communication} to train neural networks in a decentralized setting. The global model is trained across multiple clients (e.g., mobile devices, sensors, or edge nodes) without transferring local data to a central server. This method improves privacy, reduces communication overhead, and enables learning from heterogeneous, distributed datasets; for instance, see \cite{WETAL21} for more details. On the other hand, the Kaczmarz algorithm \cite{karczmarz1937angenaherte} is an iterative method for solving overdetermined linear systems, and the randomized Kaczmarz algorithm (RK) \cite{strohmer2009randomized} is a version with a particular sampling scheme. Given a matrix $A\in\mathbb{R}^{m\times n}$, we denote by $a_j$ the $j$-th row of $A$, and we denote the Frobenius norm,
\[
\Vert A\Vert_F^2 = \sum_{j=1}^m \Vert a_j\Vert_2^2.
\]
To solve a linear system $Ax=b$ for $A\in\mathbb{R}^{m\times n}$, $b\in\mathbb{R}^m$, RK considers the iteration,
\[
x_{k+1} = x_k + \frac{b_{j_k} - \langle a_{j_k}, x_k\rangle}{\Vert a_{j_k}\Vert^2}\,a_{j_k}, \quad k = 0, 1, \dotsc,
\]
where $j_k$'s are independent identically distributed (i.i.d.) random variables with distribution
$\mathbb{P}(j_k = j) = \Vert a_j\Vert_2^2/\Vert A\Vert_F^2$ for $k=0,1,\dotsc$. For consistent overdetermined linear systems, it was shown in \cite{strohmer2009randomized} that RK converges linearly to the solution in expectation. In a broader framework, it is also known that RK can be viewed as a stochastic gradient descent method with carefully chosen step sizes \cite{needell2014stochastic}. 

We consider the federated setting where there are $M$ local clients, and each client owns a subset of equations. In particular, let $Ax=b$ be the overdetermined, consistent linear system we aim to solve, where $A\in \mathbb{R}^{m\times n}$ and $b\in\mathbb{R}^m$. The system is partitioned into $M$ parts,
\[
A = \begin{bmatrix}
A_1 \\ \vdots \\ A_M
\end{bmatrix} \quad\mbox{and}\quad
b = \begin{bmatrix}
b_1 \\ \vdots \\ b_M
\end{bmatrix},
\]
and the $i$-th client sees the system $A_i x = b_i$ for $i\in [M]$; note that locally the system $A_i x = b_i$ may be underdetermined for $i\in [M]$. We briefly describe the framework of federated algorithms using federated averaging (FedAvg) \cite{mcmahan2017communication} as an example. At each federated round, a subset of local clients is selected, and the global server broadcasts the current global parameters (position) to the selected local clients. Each of the selected clients performs local updates using the global position as the initial and returns the updated local position after some iterations. The global server averages the local updates returned by the local clients and uses the averaged position as the updated global position. The process is iterated until it converges. Following the notation used in the federated optimization community \cite{WETAL21}, we denote by $x^{(t)}\in\mathbb{R}^n$ the solution at the global server after $t$ federated rounds, and $x_i^{(t,k)}$ the solution at the $i$-th client after $k$ local iterations during the $t+1$-th federated round. We propose Algorithm~\ref{algorithm1} for solving large linear systems in a federated setting.

We briefly comment on the ideas behind Algorithm~\ref{algorithm1} and the difficulties for the convergence analysis. In our setting, we allow the linear systems at the local clients $A_ix = b_i$, to be underdetermined for $i\in \{1,\dotsc, M\}$, and it is known that RK converges to the orthogonal projection of the initial position onto the affine subsets
\begin{equation}\label{introeq2}
C_i = \left\{x\in\mathbb{R}^n: A_ix = b_i \right\}, \quad i = 1,\dotsc, M
\end{equation}
for underdetermined linear systems; in particular, if the initial position is orthogonal to the null space of $A_i$, RK converges to the least norm solution of $A_ix=b_i$; see \cite{ma2015convergence}. Based on this observation, we see that $\Delta_i^{(t)}$ defined in Algorithm~\ref{algorithm1} is approximating a normal vector to a hyperplane containing $C_i$ (up to a normalizing constant), and projecting $x^{(t)}$ onto $C_i$'s is approximately the same as projecting $x^{(t)}$ onto
\[
\tilde{C}_i = \left\{x\in\mathbb{R}^n: \Delta_i^{(t)} x = d_i \right\}, \quad i=1,\dotsc, M.
\]
Therefore, instead of treating the local model changes $\Delta_i^{(t)}$'s as simply displacements, we transform them into approximate hyperplanes $\tilde{C}_i$'s, which we then use at the server to run RK. In some sense, Algorithm~\ref{algorithm1} is a stochastic process where at each time $t\in \{0,1,\dotsc, T-1\}$, we choose a collection of affine subsets, dependent on $x^{(t)}$, onto which to project. Given that it is difficult to study the convergence with a purely algebraic approach, we take a more geometric approach; see Section~\ref{mainresults} for details.  

\begin{algorithm}\label{algorithm1}
    \caption{FedRK}\label{your_label}
    \begin{algorithmic}
        \State  Initial model $x^{(0)}$
        \For{t = $0$ to $T-1$} 
            \State Sample a subset $S^{(t)}$ of clients
            \For{client $i\in S^{(t)}$}
                \State Initialize local model $x_i^{(t,0)} = x^{(t)}$
                \For{$k = 0,\dotsc, \tau-1$ }
                    \State Perform RK initialized with $x^{(t, k)}$
                \EndFor
                \State Compute the local model changes $\Delta_i^{(t)} = x_i^{(t,\tau)} - x_i^{(t,0)}$
            \EndFor
            \For{$i\in S^{(t)}$}
                \State Define $d_i = \langle\Delta_i^{(t)}, \Delta_i^{(t)} + x^{(t)} \rangle$
            \EndFor
            \State Apply RK to solve $\Delta^{(t)} x = d$ (ignoring the rows where $\Delta_i^{(t)} = 0$ with uniform sampling scheme), and let $x^{(t+1)}$ be the solution after $\tau_g$ iterates 
            
        \EndFor
    \end{algorithmic}
\end{algorithm}

For the convergence analysis, we consider two scenarios. First, we study the scenario where the local clients run finitely many iterations and the server runs one iteration ($\tau_g = 1$ in Algorithm~\ref{algorithm1}). Second, we study the scenario where the local clients run infinitely many iterations ($\tau = \infty$ in Algorithm~\ref{algorithm1}), and the server runs some finite number of iterations. We state the assumptions that we make in our main theorems, except for Corollary~\ref{introcor1}, where the system is allowed to be underdetermined.
\begin{assumption}\label{introassumption1}
The linear system $Ax = b$ is overdetermined.
\end{assumption}
\noindent By an appropriate translation, one can assume that the true solution $x^*=0$, and thus it is natural to also include the second assumption:
\begin{assumption}\label{introassumption2}
The solution to the linear system is $x^*=0$.
\end{assumption}

\noindent The proof for the first scenario ($\tau_g=1$) utilizes that of RK applied to a suitable linear system. Specifically, we show the following: 

\begin{theorem}\label{introthm2}
Following the notation in Algorithm~\ref{algorithm1}, assume $\tau = T$, $\tau_g = 1$ and $\vert S^{(t)}\vert \equiv N\in \mathbb{N}$. Let $X^{(t+1)}$ be the global update after $t+1$ iterations. Then there exists $0< \beta < 1$ such that
\[
\mathbb{E}\Vert X^{(t+1)}\Vert^2 \leq \beta^{t+1} \Vert X^{(0)}\Vert^2, \quad t= 0,1,\dotsc.
\]
\end{theorem}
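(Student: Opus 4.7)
The plan is to exploit a simplification that occurs when $\tau_g = 1$. With $d_i = \langle \Delta_i^{(t)}, \Delta_i^{(t)} + X^{(t)}\rangle$, the residual in the server's one-step RK update at the selected row $j_t$ equals $d_{j_t} - \langle \Delta_{j_t}^{(t)}, X^{(t)}\rangle = \|\Delta_{j_t}^{(t)}\|^2$, so the update collapses to $X^{(t+1)} = X^{(t)} + \Delta_{j_t}^{(t)} = X_{j_t}^{(t,\tau)}$. In other words, the new global iterate is simply one uniformly selected client's final local iterate, and the analysis reduces to controlling $\mathbb{E}\|X_{j_t}^{(t,\tau)}\|^2$.

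Under Assumption~\ref{introassumption2} we have $b=0$, so each $C_i = \mathrm{null}(A_i)$ is a linear subspace through the origin. Let $P_i$ and $Q_i = I - P_i$ denote the orthogonal projections onto $\mathrm{null}(A_i)$ and $\mathrm{range}(A_i^{T})$, respectively. Because every local RK update lies in $\mathrm{range}(A_i^{T})$, one has $P_i X_i^{(t,k)} = P_i X^{(t)}$ for all $k$ almost surely, giving the Pythagorean decomposition $\|X_i^{(t,\tau)}\|^2 = \|P_i X^{(t)}\|^2 + \|Q_i X_i^{(t,\tau)}\|^2$. Applying the standard underdetermined RK contraction (Ma--Needell, \cite{ma2015convergence}) to local RK on $A_i x = 0$ yields $\mathbb{E}[\|Q_i X_i^{(t,\tau)}\|^2 \mid X^{(t)}] \leq \rho_i^\tau \|Q_i X^{(t)}\|^2$ for some $\rho_i < 1$ depending only on $A_i$. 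Writing $\rho := \max_i \rho_i^\tau$, this rearranges to
\[
\mathbb{E}[\|X_i^{(t,\tau)}\|^2 \mid X^{(t)}] \leq \|X^{(t)}\|^2 - (1-\rho)\|Q_i X^{(t)}\|^2.
\]

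Averaging over the uniform choice of $j_t$ within $S^{(t)}$ and over the sampling of $S^{(t)}$ (each client then has equal marginal probability $N/M$), the per-client bound telescopes into
\[
\mathbb{E}[\|X^{(t+1)}\|^2 \mid X^{(t)}] \leq \|X^{(t)}\|^2 - \frac{1-\rho}{M}\,\langle X^{(t)}, G X^{(t)}\rangle, \qquad G := \sum_{i=1}^M Q_i.
\]
The matrix $G$ is positive definite: $Gx = 0$ forces $\|Q_i x\|^2 = 0$ for every $i$, so $x \in \bigcap_i \mathrm{null}(A_i) = \mathrm{null}(A)$, which equals $\{0\}$ by Assumption~\ref{introassumption1} combined with the uniqueness of the solution $x^* = 0$. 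Setting $\mu := \lambda_{\min}(G) > 0$ and $\beta := 1 - (1-\rho)\mu/M \in (0,1)$ gives $\mathbb{E}[\|X^{(t+1)}\|^2 \mid X^{(t)}] \leq \beta\|X^{(t)}\|^2$, and iterating via the tower property produces the claimed rate $\mathbb{E}\|X^{(t+1)}\|^2 \leq \beta^{t+1}\|X^{(0)}\|^2$.

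The main technical obstacle is bookkeeping around clients with $\Delta_i^{(t)} = 0$, which the algorithm drops before running server RK. A short comparison argument handles this: since each local RK step is an orthogonal projection onto a hyperplane through the origin, we have $\|X_i^{(t,\tau)}\|^2 \leq \|X^{(t)}\|^2$ deterministically, with equality precisely when $\Delta_i^{(t)} = 0$. Discarding the corresponding rows therefore concentrates the uniform sampling onto clients that have genuinely decreased in norm, so the resulting conditional expectation is dominated by the unfiltered average used above, and the bound carries over verbatim.
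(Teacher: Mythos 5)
Your proof is correct, and it shares the paper's crucial opening step: the observation that with $\tau_g=1$ the choice $d_i=\langle\Delta_i^{(t)},\Delta_i^{(t)}+x^{(t)}\rangle$ makes the server's single RK step collapse to $X^{(t+1)}=X^{(t)}+\Delta_{j_t}^{(t)}=X_{j_t}^{(t,\tau)}$, so the global iterate is one randomly chosen client's final local iterate (this is exactly the content of (\ref{maineq8})). From there the two arguments genuinely diverge. The paper discards almost all of the local work: it bounds $\mathbb{E}\Vert X_s^{(t,T)}\Vert\leq\mathbb{E}\Vert X_s^{(t,1)}\Vert$ using only the non-expansiveness of the local projections, recognizes the remaining average as one step of classical RK applied to a system $A'x=0$ assembled from copies of the rows of $A$, and quotes the classical rate. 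You instead retain the full $\tau$-step local contraction via the underdetermined RK bound of \cite{ma2015convergence}, split $\Vert X_i^{(t,\tau)}\Vert^2$ along $\operatorname{null}(A_i)\oplus\operatorname{range}(A_i^T)$ using the fact that local RK preserves the null-space component, and aggregate the per-client gains into $G=\sum_i Q_i$, whose positive definiteness (equivalent to $\operatorname{null}(A)=\{0\}$, as guaranteed by Assumptions~\ref{introassumption1} and~\ref{introassumption2}) yields the explicit rate $\beta=1-(1-\rho)\lambda_{\min}(G)/M$ with $\rho=\max_i\rho_i^{\tau}$. This buys two things the paper's sketch does not: a fully quantitative constant, and a rate that visibly improves as $\tau$ grows, which matches the experimental observation in Section~\ref{experiments} that more local iterations accelerate convergence; your treatment of the dropped zero rows (the filtered uniform average is dominated by the unfiltered one because the discarded terms equal $\Vert X^{(t)}\Vert^2$, the largest possible value) is also a point the paper glosses over entirely. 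Two small caveats: the positive definiteness of $G$ and the marginal probability $1/M$ both rely on $S^{(t)}$ being drawn uniformly over $N$-subsets, or at least on every client having positive marginal selection probability -- an assumption the paper leaves equally implicit -- and your claim that norm equality forces $\Delta_i^{(t)}=0$ deserves the one-line justification that each local step is an orthogonal projection onto a subspace through the origin, so any step that actually moves the iterate strictly decreases its norm.
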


\noindent For the second scenario, let us first assume the server performs one RK iteration ($\tau_g=1$ in Algorithm~\ref{algorithm1}). Since we assume that the solution $x^*=0$, the $C_i$'s defined in (\ref{introeq2}) are linear subspaces for $i\in [M]$. Denote by $P_{C_i}$ the orthogonal projection operator onto $C_i$ for $i\in [M]$. We see that (following the notation in Algorithm~\ref{algorithm1}),
\begin{equation}\label{introeq1}
\mathbb{E}X^{(t+1)} = \sum_{s\in S^{(t)}} \frac{1}{\vert S^{(t)}\vert} \left(I - \frac{(X^{(t)}-P_{C_s}X^{(t)})(X^{(t)}-P_{C_s}X^{(t)})^T}{\Vert X^{(t)} - P_{C_s}X^{(t)}\Vert^2} \right) X^{(t)},  
\end{equation}
defines the sequence produced by our federated Kaczmarz algorithm. We first prove a technical theorem that characterizes the decay of Algorithm~\ref{algorithm1} when the current position is one unit distance away from the true solution, which involves studying a function related to (\ref{introeq1}); see Theorem~\ref{maintheorem1}. Then we use the technical theorem to prove the following:

\begin{theorem}\label{introthm1}
Following the notation in Algorithm~\ref{algorithm1}, assume $\tau = \infty$, $\tau_g = T$ and $\vert S^{(t)}\vert \equiv N\in \mathbb{N}$. Let $X^{(t+1)}$ be the global update after $t+1$ iterations. Then there exists $0< \beta < 1$ such that
\[
\mathbb{E}\Vert X^{(t+1)}\Vert^2 \leq \beta^{t+1} \Vert X^{(0)}\Vert^2, \quad t= 0,1,\dotsc.
\]
\end{theorem}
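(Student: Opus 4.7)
The plan is to reduce the $\tau_g > 1$ analysis to a single application of Theorem~\ref{maintheorem1}, using the non-expansiveness of the remaining server Kaczmarz projections with respect to the origin to absorb the other $\tau_g - 1$ iterations for free.

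Fix a federated round and condition on $X^{(t)}$ and the sampled subset $S^{(t)}$. Because $\tau = \infty$, each local client returns the exact projection, so $\Delta_i^{(t)} = P_{C_i}X^{(t)} - X^{(t)}$. Under Assumption~\ref{introassumption2} each $C_i$ is a linear subspace through the origin, so $\Delta_i^{(t)} \perp C_i$ and a short calculation shows
\[
d_i = \langle \Delta_i^{(t)}, \Delta_i^{(t)} + X^{(t)} \rangle = \langle \Delta_i^{(t)}, P_{C_i}X^{(t)} \rangle = 0.
\]
Hence the server's linear system $\Delta^{(t)} x = d$ is in fact homogeneous, and every hyperplane $\tilde C_i = \{x : \langle \Delta_i^{(t)}, x\rangle = 0\}$ onto which the server's Kaczmarz iterate reflects contains the origin.

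I would next verify that the first server Kaczmarz step starting at $X^{(t)}$ lands exactly on $P_{C_s}X^{(t)}$ for the sampled index $s$: using $\langle P_{C_s}X^{(t)}, X^{(t)} \rangle = \Vert P_{C_s}X^{(t)}\Vert^2$ yields $\langle \Delta_s^{(t)}, X^{(t)}\rangle = -\Vert \Delta_s^{(t)}\Vert^2$, so the Kaczmarz update simplifies to $X^{(t)} + \Delta_s^{(t)} = P_{C_s}X^{(t)}$. Denote this intermediate iterate by $Y_1^{(t)}$. Theorem~\ref{maintheorem1} controls the expected squared decay at unit distance; by quadratic scaling the same rate applies at arbitrary $\Vert X^{(t)}\Vert$, yielding a constant $0<\beta<1$ (independent of $X^{(t)}$) with
\[
\mathbb{E}\bigl[\Vert Y_1^{(t)}\Vert^2 \bigm| X^{(t)}\bigr] \le \beta\,\Vert X^{(t)}\Vert^2.
\]
For $k = 1, \dots, \tau_g - 1$ each subsequent server iterate $Y_{k+1}^{(t)}$ is obtained by orthogonally projecting $Y_k^{(t)}$ onto one of the hyperplanes $\tilde C_{s_{k+1}}$; since every $\tilde C_s$ contains the origin, this projection is non-expansive, so $\Vert Y_{k+1}^{(t)}\Vert \le \Vert Y_k^{(t)}\Vert$ pathwise. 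Chaining these inequalities and taking conditional expectations gives $\mathbb{E}[\Vert X^{(t+1)}\Vert^2 \mid X^{(t)}] \le \beta\Vert X^{(t)}\Vert^2$, and the tower property together with induction on $t$ produces the claimed geometric rate.

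The hard part is hidden inside Theorem~\ref{maintheorem1}: obtaining a constant $\beta<1$ that is uniform in the direction of $X^{(t)}$ requires ruling out a unit vector fixed simultaneously by every projection $P_{C_s}$, which ultimately rests on Assumption~\ref{introassumption1} (the overdetermined consistent system forces $\bigcap_s C_s = \{0\}$) together with a compactness argument on the unit sphere. Once that uniform rate is in hand, the reduction above is purely bookkeeping via non-expansiveness and the tower property.
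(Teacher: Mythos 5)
Your outer reduction is exactly the paper's: the computation $d_s - \langle \Delta_s^{(t)}, X^{(t)}\rangle = \Vert\Delta_s^{(t)}\Vert^2$ showing that the first server Kaczmarz step lands on $P_{C_s}X^{(t)}$ is correct, the homogeneity $d_i = 0$ under Assumption~\ref{introassumption2} is correct, and absorbing the remaining $\tau_g - 1$ server projections by non-expansiveness of projections onto subspaces through the origin is precisely the paper's closing remark in its proof. The gap is in the middle step, where you invoke Theorem~\ref{maintheorem1} directly to produce a uniform one-step rate $\beta < 1$. Theorem~\ref{maintheorem1} bounds $f$ away from $1$ only on the restricted domain $D_{S,\,\epsilon}$, which excludes a neighborhood of $B_S$ in the $x$-variable and the set $C_{S,\,\epsilon}$ where $y$ is within $\epsilon$ of $h_{S,\,x}$. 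In the application the relevant point is $y = x = X^{(t)}/\Vert X^{(t)}\Vert$, and this point can fall outside $D_{S,\,\epsilon}$ for the sampled subset: each summand equals $1 - \Vert x - P_{C_s}x\Vert^2$, so if $x$ is close to every $C_s$ with $s\in S$ (which is possible for a proper subset of size $N$, since $\cap_{s\in S}C_s$ need not be trivial even when $\cap_{s\in [M]}C_s=\{0\}$), then $f(S,x,u_S,x)$ is arbitrarily close to $1$, and in the extreme case $x\in\cap_{s\in S}C_s$ the server round does nothing at all. Hence no constant $\beta<1$ uniform in the direction of $X^{(t)}$ can be extracted conditionally on an arbitrary sampled subset; the contraction must come from averaging over the random draw of $S^{(t)}$.

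This missing work is not ``hidden inside Theorem~\ref{maintheorem1}'' as you suggest; the paper supplies it with three additional pieces. Lemma~\ref{mainlem1} uses compactness of $S^{n-1}$ and $\cap_s C_s=\{0\}$ (from Assumption~\ref{introassumption1}) to produce an $\epsilon>0$ such that every unit vector lies in $K_{s,\epsilon}$ for at least one $s$. The second statement of Corollary~\ref{maincor2} shows that one good index suffices: bound the other $\vert S\vert - 1$ summands trivially by $1$ and only the term for $s$ by $\alpha'(\{s\},\epsilon)<1$, yielding $\gamma(s,\epsilon) = (\vert S\vert-1)/\vert S\vert + \alpha'(\{s\},\epsilon)/\vert S\vert < 1$ on $E_{S,s,\epsilon}$. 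Lemma~\ref{mainlem2} then averages over the subsets containing that good index (a fraction ${M-1\choose \vert S\vert-1}/{M\choose\vert S\vert}$ of all subsets) to get a bound $\beta_\epsilon<1$ on $g(x,x)$ valid for every $x\in S^{n-1}$. A related bookkeeping point you skip: when $X^{(t)}\in C_s$ for a sampled $s$, the row $\Delta_s^{(t)}=0$ is discarded by the algorithm, which the paper handles through the set $S_x$ and the function $g$ in (\ref{maineq7}); your argument needs the same restriction. Once these are in place, your chaining via non-expansiveness and the tower property is fine.
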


\noindent Perhaps interestingly, Theorem~\ref{maintheorem1} also gives an alternative proof for the convergence of classical RK; see Corollary~\ref{maincor1}. 
Finally, in the second scenario, we also deduce that Algorithm~\ref{algorithm1} converges linearly to the orthogonal projection from the initial global position $x^{(0)}$ onto $\cap_{i=1}^M C_i$ when the whole system is underdetermined. Specifically, if $\cap_{i=1}^M C_i$ is a single point, Algorithm~\ref{algorithm1} converges to the true solution as before.

\begin{corollary}\label{introcor1}
Following the notation in Algorithm~\ref{algorithm1}, assume $\tau = \infty$, $\tau_g = T$ and $\vert S^{(t)}\vert \equiv N\in \mathbb{N}$. Let $X^{(t+1)}$ be the global update after $t+1$ iterations. Denote $C=\cap_{i=1}^M C_i$, where $C_i$'s are defined in (\ref{introeq2}), and $P_C$ the orthogonal projection onto $C$. Then there exists $0< \beta < 1$ such that
\[
\mathbb{E}\Vert X^{(t+1)} - P_CX^{(0)}\Vert^2 \leq \beta^{t+1} \Vert X^{(0)}\Vert^2, \quad t= 0,1,\dotsc.
\]
\end{corollary}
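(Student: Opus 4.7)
The plan is to deduce Corollary~\ref{introcor1} from Theorem~\ref{introthm1} via a translation argument. Let $p = P_C X^{(0)}$, $Y^{(t)} = X^{(t)} - p$, $W_i = \mathrm{null}(A_i)$ and $V = \cap_{i=1}^M W_i = \mathrm{null}(A)$. I aim to show that the $Y$-process coincides with Algorithm~\ref{algorithm1} applied to the homogeneous system $Ay = 0$ (for which $0$ is a solution), and that the trajectory stays inside $V^\perp$. Because on $V^\perp$ the effective system is overdetermined with unique solution $0$, Theorem~\ref{introthm1} then delivers exponential decay of $\mathbb{E}\Vert Y^{(t+1)}\Vert^2$, from which translating back yields the corollary.

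First, I would rewrite each update in $Y$-coordinates. Since $p \in C \subseteq C_i$, orthogonal projection onto the affine set $C_i$ satisfies $P_{C_i}(X^{(t)}) = p + P_{W_i}(Y^{(t)})$, and hence $\Delta_i^{(t)} = -P_{W_i^\perp}(Y^{(t)})$ is a function of $Y^{(t)}$ alone. A short computation shows $\tilde{C}_i \supseteq C_i$, so in particular $p \in \tilde{C}_i$ and $\langle \Delta_i^{(t)}, p\rangle = d_i$. Writing $X^{(t+1)} = Y^{(t+1)} + p$, the server's equation $\Delta^{(t)} X^{(t+1)} = d$ becomes $\Delta^{(t)} Y^{(t+1)} = 0$, so in $Y$-coordinates the server runs randomized Kaczmarz on the homogeneous system $\Delta^{(t)} y = 0$ initialized at $Y^{(t)}$. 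The entire federated round, viewed in $Y$-coordinates, thus coincides with one round of Algorithm~\ref{algorithm1} applied to $Ay = 0$.

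Second, I would show that $Y^{(t)} \in V^\perp$ for every $t$. By construction $Y^{(0)} = X^{(0)} - P_C X^{(0)} \in V^\perp$. Each local RK update subtracts a scalar multiple of a row of some $A_i$, which lies in $\mathrm{row}(A) = V^\perp$; each server update subtracts a multiple of $\Delta_i^{(t)} \in W_i^\perp \subseteq V^\perp$. By induction, $Y^{(t)} \in V^\perp$ throughout. On $V^\perp$ one has $\cap_i (W_i \cap V^\perp) = V \cap V^\perp = \{0\}$, so the restricted problem is overdetermined with unique solution $0$, and Theorem~\ref{introthm1} applies to yield $\mathbb{E}\Vert Y^{(t+1)}\Vert^2 \leq \beta^{t+1}\Vert Y^{(0)}\Vert^2$.

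Finally, since Assumption~\ref{introassumption2} implies $0 \in C$, one has $\Vert Y^{(0)}\Vert = \mathrm{dist}(X^{(0)}, C) \leq \Vert X^{(0)}\Vert$, which completes the bound after translating back to the original coordinates. The main obstacle I anticipate is the bookkeeping of the first step — carefully verifying the identities $P_{C_i}(X^{(t)}) = p + P_{W_i}(Y^{(t)})$ and $p \in \tilde{C}_i$ so that the translated dynamics exactly match those to which Theorem~\ref{introthm1} applies. Once this is settled, invariance of $V^\perp$ and the transfer of exponential decay follow without further analytic work.
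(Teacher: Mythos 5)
Your proposal is correct and follows essentially the same route as the paper: the paper likewise decomposes $X^{(0)} = P_{C^\perp}X^{(0)} + P_C X^{(0)}$, observes that $P_C X^{(0)}$ is a fixed point of all the projections (since $C \subseteq C_i$), and applies Theorem~\ref{introthm1} to the component in $C^\perp$, where the intersection reduces to $\{0\}$ and the system is determined. Your translation $Y^{(t)} = X^{(t)} - P_C X^{(0)}$ together with the checks that $p \in \tilde{C}_i$ and that $V^\perp$ is invariant is just a more explicit bookkeeping of that same argument.
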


We demonstrate experimentally the behavior of FedRK when applied to related problems. For sparse approximation problems, the system is modeled as $b = Ax^* + e$ where $A\in\mathbb{R}^{m\times n}$ is a wide matrix ($m < n$), $x^* \in\mathbb{R}^n$ is a true sparse signal, and $e\in\mathbb{R}^m$ is noise. Given a sparsity level $s\in\mathbb{N}$, the hard thresholding operator $T_s:\mathbb{R}^n \rightarrow \mathbb{R}^n$ is defined as the orthogonal projection onto the entries with the $s$ largest magnitudes. By combining with a hard thresholding operator, the Kaczmarz algorithm has been proposed as a method to solve such problems; see for instance \cite{jeong2023linear, zhang2015iterative}. In the federated setting, we show that our algorithm can be combined with the hard thresholding operator $T_s$ to solve the sparse approximation problem; see Algorithm~\ref{algorithm3}. For the least squares problem, where $A\in\mathbb{R}^{m\times n}$ is a tall matrix ($m > n$) and the goal is to minimize $\Vert Ax-b\Vert^2$, it is known that the randomized Kaczmarz algorithm does not converge to the least squares solution in general, but its iterates reach within a horizon of the solution; see \cite{needell2010randomized}. In the federated setting, we show that our algorithm has the same behavior. Moreover, we show that by adding a suitable amount of noisy columns to $A$, one can shrink the horizon. Finally, we apply Algorithm~\ref{algorithm3} to the prostate cancer data considered in \cite{tibshirani1996regression}, and we demonstrate that the selection of Algorithm~\ref{algorithm3} is consistent with Lasso. 

\begin{algorithm}[ht]
    \caption{FedRK with thresholding}\label{algorithm3}
    \begin{algorithmic}
        \State  Initial model $x^{(0)}$
        \For{t = $0$ to $T-1$} 
            \State Sample a subset $S^{(t)}$ of clients
            \For{client $i\in S^{(t)}$}
                \State Initialize local model $x_i^{(t,0)} = x^{(t)}$
                \For{$k = 0,\dotsc, \tau-1$ }
                    \State Perform RK initialized with $x^{(t, k)}$
                \EndFor
                \State Compute the local model changes $\Delta_i^{(t)} = x_i^{(t,\tau)} - x_i^{(t,0)}$
            \EndFor
            \For{$i\in S^{(t)}$}
                \State Define $d_i = \langle\Delta_i^{(t)}, \Delta_i^{(t)} + x^{(t)} \rangle$
            \EndFor
            \State Apply RK to solve $\Delta^{(t)} x = d$ (ignoring the rows where $\Delta_i^{(t)} = 0$ with uniform sampling scheme), and let $x^{(t+1)}$ be the solution after $\tau_g$ iterates \\

            \State Apply hard thresholding operator $x^{(t+1)} = T_s x^{(t+1)}$
        \EndFor
    \end{algorithmic}
\end{algorithm}

\subsection{Contribution}

We summarize our contributions. First, we propose the federated Kaczmarz algorithm (FedRK) for solving large linear systems in the federated setting, and we prove the linear convergence of our algorithm. As a corollary of our analysis, we give an alternative proof for the linear convergence of the classic randomized Kaczmarz algorithm (RK). Second, we demonstrate experimentally that our algorithm can be combined with hard thresholding to solve sparse approximation problems. We also demonstrate experimentally that it converges to a horizon of the least squares solution as the RK when applied to inconsistent systems. Finally, we apply our algorithm to real data and show the possible use for feature selection in the federated setting.  

\subsection{{Organization}}

The rest of the paper is organized as follows. In Section~\ref{mainresults}, we present our main results. We first present Theorem~\ref{maintheorem1}, which characterizes the convergence behavior when a collection of local clients is selected and the current position is one unit distance away from the true solution; the proof of Theorem~\ref{maintheorem1} can be interesting in its own right. In particular, a corollary of Theorem~\ref{maintheorem1} is an alternative proof for the linear convergence of the classic RK; see Corollary~\ref{maincor1}. We then prove the convergence theorems of our algorithm. The proofs of Theorem~\ref{introthm2} and Theorem~\ref{introthm1} are based on slightly different strategies, and we provide roadmaps of the proofs as guides. In Section~\ref{experiments}, we present the experiments. We demonstrate experimentally the linear convergence of Algorithm~\ref{algorithm1}. Perhaps interestingly, our experiment shows that running more local iterations helps the algorithm converge faster, which is usually not seen in other federated algorithms. We then apply Algorithm~\ref{algorithm3} to sparse approximation problems and Algorithm~\ref{algorithm1} to least squares problems for inconsistent systems; our results show that our algorithm behaves similarly to the classical RK, which hints that our algorithm can be efficiently combined with other Kaczmarz variants to extend other variants to the federated setting.  Finally, we apply Algorithm~\ref{algorithm3} to real data and show that our algorithm can potentially be used for feature selection in the federated setting. In Section~\ref{discussion}, we present some discussion and future directions.  

\section{Main Results}\label{mainresults}
\subsection{Notation}\label{mainsection1}
In this section, we define the notation used throughout. Denote by $[M] = \{1,\dotsc, M\}$. We partition the system into $M$ parts,
\[
A = \begin{bmatrix}
A_1 \\ \vdots \\ A_M
\end{bmatrix} \quad\mbox{and}\quad
b = \begin{bmatrix}
b_1 \\ \vdots \\ b_M
\end{bmatrix},
\]
and the $i$-th client sees the system $A_i x = b_i$ for $i\in [M]$. Let $C_i$ be defined as in (\ref{introeq2}) for $i\in [M]$. Under Assumption~\ref{introassumption2}, we have $C_i\subseteq \mathbb{R}^n$ are linear subspaces for $i\in [M]$. We denote by $P_{C}: \mathbb{R}^n \rightarrow C$ the orthogonal projection onto a linear subspace $C\subseteq \mathbb{R}^n$. 

Let $\mathcal{P}$ be the set of probability measures on $[M]$. Given $x\in\mathbb{R}^n$ and $S\subseteq [M]$, we denote by $\mathcal{P}_S\subseteq \mathcal{P}$ the set of probability measures such that
\begin{equation}\label{maineq1}
\mathcal{P}_S = \left\{ p\in\mathcal{P}: \operatorname{supp}(p) = S\right\};
\end{equation}
for instance, the uniform distribution over $S\subseteq [M]$ is in $\mathcal{P}_S$. An object that forms the core of our analysis is the following function,
\begin{equation}\label{maineq3}
f(S,\,x,\,p, \,y) = \sum_{s\in S} p(s) \left\Vert \left(I - \frac{(x-P_{C_s}x)(x-P_{C_s}x)^T}{\Vert x - P_{C_s}x\Vert^2} \right) y\right\Vert^2
\end{equation}
for $S\subseteq [M]$, $x\in \cap_{s\in S} C_s^c$, $p\in\mathcal{P}$ and  $y\in \mathbb{R}^n$. For $p\in\mathcal{P}_S$, this function measures the average decrease of the norm of $y$, when randomly projecting onto the hyperplanes formed by the normal vectors $\{x-P_{C_s}x\}_{s\in S}$. In fact, we will study the function on a refined domain; see Section~\ref{mainsection2} for more details.

For the readers' convenience, we review some notions in geometry and topology; one can find detailed descriptions in \cite{munkrestopology} and \cite{spivakgeometry}, for instance. A topological space is a pairing $(X, \mathcal{T})$, where $X$ is the whole space and $\mathcal{T}$ is the topology, i.e. a collection of open subsets satisfying
\begin{itemize}
\item $\emptyset\in\mathcal{T}$
\item $U\in \mathcal{T}$ implies $U^c \in \mathcal{T}$
\item $\{U_{\alpha}\}_{\alpha\in A}\subseteq \mathcal{T}$ implies $\cup_{\alpha \in A} U_{\alpha} \in \mathcal{T}$, where $A$ is an arbitrary index set (possibly uncountable)
\end{itemize}
A set $K\subseteq X$ is compact if any open covering of $K$ admits a finite sub-covering. A map $f: X\rightarrow Y$ between two topological spaces is continuous if $f^{-1}(U)\subseteq X$ is open for all open sets $U\subseteq Y$. If $f: X\rightarrow Y$ between two topological spaces is continuous, then $K$ is compact implies $f(K)$ is compact. Another ingredient in our proof is the implicit function theorem, which we recall the statement:

\begin{theorem}[Implicit function theorem]
Let $U\subseteq \mathbb{R}^{n+m}$ be an open subset, and $f: U\rightarrow \mathbb{R}^m$ be smooth. Given $(x_0, y_0)\in\mathbb{R}^{n+m}$ such that $f(x_0, y_0) = 0$ and $D_yf(x_0, y_0): \mathbb{R}^m \rightarrow \mathbb{R}^m$ is invertible, then, in a open neighborhood of $(x_0, y_0)$, the level set 
\[
\left\{(x, y)\in\mathbb{R}^{n+m}: f(x, y) = 0\right\}
\]
is smoothly parameterized by $x$; i.e. in a neighborhood $V\subseteq\mathbb{R}^n$ of $x_0$ there exists a smooth function $g: V\rightarrow \mathbb{R}^m$ such that
\[
\left\{(x, y)\in\mathbb{R}^{n+m}: f(x, y) = 0, \,x\in V\right\} = \left\{(x, g(x))\in\mathbb{R}^{n+m}: x\in V\right\}.
\]
\end{theorem}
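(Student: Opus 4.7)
The plan is to reduce the problem to a fixed-point equation and invoke the Banach contraction mapping principle, which is the standard route. By translating, I may assume $(x_0, y_0) = (0, 0)$; and by replacing $f$ with $T^{-1}\circ f$, where $T = D_y f(0, 0)$, I may further assume $D_y f(0, 0) = I_m$ (this substitution is invertible on the codomain, smooth, and preserves the zero set). Define $\Phi(x, y) = y - f(x, y)$, so that $f(x, y) = 0$ is equivalent to the fixed-point equation $\Phi(x, y) = y$. Observe $D_y \Phi(0, 0) = I - I = 0$.

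Next I would verify the hypotheses of the Banach fixed-point theorem on a closed ball $\overline{B_r(0)}\subseteq \mathbb{R}^m$, uniformly in $x$ taken from an open neighborhood $V\subseteq \mathbb{R}^n$ of the origin. By continuity of $D_y\Phi$, shrink $r$ so that $\|D_y\Phi(x, y)\|_{\mathrm{op}} \leq 1/2$ throughout $B_r(0)\times B_r(0)$; then $y\mapsto \Phi(x, y)$ is a $1/2$-contraction on $\overline{B_r(0)}$ for each such $x$. Using continuity of $\Phi(\cdot, 0)$ at the origin together with $\Phi(0, 0) = 0$, shrink to $V\subseteq B_r(0)$ so that $\|\Phi(x, 0)\|\leq r/2$ whenever $x\in V$. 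The triangle inequality then gives $\Phi(x, \cdot):\overline{B_r(0)}\to \overline{B_r(0)}$ for every $x\in V$, so Banach's theorem produces a unique $g(x)\in \overline{B_r(0)}$ with $\Phi(x, g(x)) = g(x)$, i.e.\ $f(x, g(x)) = 0$. The uniqueness clause of Banach's theorem identifies the level set of $f$ inside $V\times B_r(0)$ with the graph of $g$.

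The hard part is establishing the smoothness of $g$. Continuity of $g$ is immediate from the standard perturbation estimate $\|g(x) - g(x')\|\leq 2\|\Phi(x, g(x')) - \Phi(x', g(x'))\|$ combined with continuity of $\Phi$. For differentiability, I would differentiate $f(x, g(x)) = 0$ formally to produce the candidate derivative $Dg(x) = -D_y f(x, g(x))^{-1} D_x f(x, g(x))$; invertibility of $D_y f$ near $(0, 0)$ follows from the invertibility at $(0, 0)$ together with continuity of the determinant, so $r$ and $V$ can be shrunk further if needed. I would then verify this is the Fréchet derivative of $g$ via a first-order Taylor expansion of $f$ at $(x, g(x))$. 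Higher regularity comes from bootstrapping: when $f\in C^k$, the explicit formula for $Dg$ expresses it as a composition of $C^{k-1}$ maps, yielding $g\in C^k$ by induction.
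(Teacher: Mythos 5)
The paper does not prove this statement at all: it is recalled verbatim as a classical ingredient (with pointers to standard references such as \cite{spivakgeometry}) and then applied in the proof of Theorem~\ref{maintheorem1}, so there is no in-paper argument to compare yours against. That said, your contraction-mapping proof is the standard and correct one. The reduction to $D_yf(0,0)=I_m$, the reformulation as the fixed-point equation $\Phi(x,y)=y$ with $\Phi(x,y)=y-f(x,y)$, the choice of $r$ so that $\|D_y\Phi\|_{\mathrm{op}}\le 1/2$ on $B_r(0)\times B_r(0)$ (which makes $\Phi(x,\cdot)$ a $1/2$-contraction by the mean value inequality on the convex ball), the self-mapping estimate, the Lipschitz-type bound $\|g(x)-g(x')\|\le 2\|\Phi(x,g(x'))-\Phi(x',g(x'))\|$ for continuity, and the bootstrap from the formula $Dg(x)=-D_yf(x,g(x))^{-1}D_xf(x,g(x))$ are all sound. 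One small point worth tightening: to conclude that the level set inside the \emph{open} neighborhood $V\times B_r(0)$ coincides with the graph of $g$, you should arrange $\|\Phi(x,0)\|<r/2$ strictly, so that $\|g(x)\|\le 2\|\Phi(x,0)\|<r$ places the fixed point in the open ball rather than merely in $\overline{B_r(0)}$; this is achievable by continuity since $\Phi(0,0)=0$ and costs nothing. With that adjustment the argument is complete.
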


\subsection{A technical theorem}\label{mainsection2}
In this section, we prove our main technical theorem, and deduce the convergence of the classical randomized Kaczmarz as a corollary. To motivate the setting, we start with a discussion of how one can reduce the analysis to a function on the product of two spheres.

Given $x\in\mathbb{R}^n\setminus \{0\}$, one has by linearity
\[
P_{C_i}x = \Vert x\Vert P_{C_i}\frac{x}{\Vert x\Vert},
\]
and
\[
x - P_{C_i}x = \Vert x\Vert \left( \frac{x}{\Vert x\Vert} - P_{C_i}\frac{x}{\Vert x\Vert}\right).
\]
This shows that the normal vector obtained after normalizing $x-P_{C_i}x$ is independent of the length of $x\in\mathbb{R}^n$. Similarly, for $y\in\mathbb{R}^n\setminus \{0\}$, one has
\[
\left(I - \frac{(x-P_{C_s}x)(x-P_{C_s}x)^T}{\Vert x - P_{C_s}x\Vert^2} \right) y =  \Vert y\Vert \left(I - \frac{(x-P_{C_s}x)(x-P_{C_s}x)^T}{\Vert x - P_{C_s}x\Vert^2} \right) \frac{y}{\Vert y\Vert}.
\]
This suggests that it is natural to study our problem on the unit sphere $S^{n-1}\subseteq \mathbb{R}^n$. Given $\epsilon>0$, denote 
\begin{equation}\label{maineq5}
B_S = \left\{ x\in S^{n-1}: \operatorname{dim}\left( \operatorname{span}\{x-P_{C_s}x\}_{s\in S} \right) < \vert S\vert \right\}, \quad B_{S,\,\epsilon} = \{x\in S^{n-1}: d(x, B_S) < \epsilon\}
\end{equation}
and $\overset{*}{\mathcal{S}}_{S, \,\epsilon} = S^{n-1}\setminus B_{S,\,\epsilon}$.
We will consider $f$ as a function on $2^M\times \overset{*}{\mathcal{S}}_{S,\,\epsilon}\times \mathcal{P}\times S^{n-1}$ for technical reasons that will become clear later; see Figure~\ref{mainfig1} for an illustration. It is clear that $f(S,\, \cdot,\, p,\, \cdot): \overset{*}{\mathcal{S}}_{S,\,\epsilon}\times S^{n-1}\rightarrow \mathbb{R}$ is smooth for each $(S,\,p)\in 2^M\times \mathcal{P}$. We have the following:

\begin{theorem}\label{maintheorem1}
Given $S\subseteq [M]$ and $\epsilon>0$, let
\[
h_{S,\,x} = \bigcap_{s\in S} \left\{ y\in S^{n-1}: (x-P_{C_s}x)^T\, y = 0 \right\}, \quad x\in S^{n-1}.
\]
Denote
\[
C_{S,\,\epsilon} = \{(x, y)\in \overset{*}{\mathcal{S}}_{S,\,\epsilon}\times S^{n-1}: d(y,\,h_{S,\,x}) < \epsilon \},\quad D_{S,\,\epsilon} = (\overset{*}{\mathcal{S}}_{S,\,\epsilon}\times S^{n-1}) \setminus C_{S,\,\epsilon}.
\]
Given $p\in \mathcal{P}_S$ defined in (\ref{maineq1}),  consider $f(S,\,\cdot,\,p,\,\cdot): D_{S,\,\epsilon} \rightarrow \mathbb{R}$ defined as
\[
f(S,\,x,\,p,\,y) = \sum_{s\in S} p(s) \left\Vert \left(I - \frac{(x-P_{C_s}x)(x-P_{C_s}x)^T}{\Vert x - P_{C_s}x\Vert^2} \right) y\right\Vert^2.
\]
Then, $f(S,\,x,\,p,\,y) \leq \alpha(S,\,p,\,\epsilon) < 1$ for $(x,y)\in D_{S,\,\epsilon}$.
Moreover, for all $(x,y)\in D_{S,\,\epsilon}$, there exists $s(y)\in S$ such that
\[
\left\Vert \left(I - \frac{(x-P_{C_{s(y)}}x)(x-P_{C_{s(y)}}x)^T}{\Vert x - P_{C_{s(y)}}x\Vert^2} \right) y\right\Vert^2 \leq \frac{\alpha(S,\,p,\,\epsilon)}{\vert S\vert\, p(s(y))}.
\]
\end{theorem}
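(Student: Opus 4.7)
The plan is to reduce both assertions to a compactness argument on $D_{S,\epsilon}$: establish that $D_{S,\epsilon}$ is compact and that $f(S,\cdot,p,\cdot)$ is continuous on it, derive a pointwise strict bound $f<1$, and extract the uniform constant $\alpha(S,p,\epsilon)$ as the maximum. The moreover then follows by a simple averaging step over the $|S|$ summands.

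First I would establish compactness of $D_{S,\epsilon}$. The sphere $S^{n-1}$ is compact, and $B_{S,\epsilon}$ is open as an $\epsilon$-thickening, so $\overset{*}{\mathcal{S}}_{S,\epsilon}$ is closed and hence compact. To conclude that $D_{S,\epsilon}$ is closed in $\overset{*}{\mathcal{S}}_{S,\epsilon}\times S^{n-1}$ I need $(x,y)\mapsto d(y,h_{S,x})$ to be continuous. Writing $v_s(x)=x-P_{C_s}x$ and $V_x=[v_s(x)]_{s\in S}$, on $\overset{*}{\mathcal{S}}_{S,\epsilon}$ the columns of $V_x$ are uniformly linearly independent, so $V_x^T V_x$ is invertible and the orthogonal projection $P_{W_x}=I-V_x(V_x^T V_x)^{-1}V_x^T$ onto $W_x:=\operatorname{span}\{v_s(x)\}_{s\in S}^{\perp}$ depends continuously on $x$; since maximizing $y^T z$ over $z\in W_x\cap S^{n-1}$ gives $d(y,h_{S,x})^2=2-2\|P_{W_x}y\|$, this distance is continuous, $C_{S,\epsilon}$ is open, and $D_{S,\epsilon}$ is compact. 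I expect this continuity of $x\mapsto h_{S,x}$ to be the main technical step; the implicit function theorem recalled at the end of Section~\ref{mainsection1} can alternatively be used here to produce a local smooth frame for $W_x$.

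The pointwise strict bound is then an inner-product calculation: for $y\in S^{n-1}$, each summand of $f$ equals $1-(v_s(x)^T y)^2/\|v_s(x)\|^2\leq 1$, with equality iff $v_s(x)\perp y$. On $D_{S,\epsilon}$ the constraint $d(y,h_{S,x})\geq\epsilon>0$ forces $y\notin h_{S,x}$, so some $s_0\in S$ has $v_{s_0}(x)^T y\neq 0$; because $p\in\mathcal{P}_S$ has $p(s_0)>0$, that summand is strictly less than $p(s_0)$, yielding $f(S,x,p,y)<1$. The continuity of $f$ on $D_{S,\epsilon}$ (valid since $v_s(x)\neq 0$ on $\overset{*}{\mathcal{S}}_{S,\epsilon}$, because $x\in C_s$ would put $x$ in $B_S$) together with compactness then gives the uniform maximum $\alpha(S,p,\epsilon):=\max_{D_{S,\epsilon}}f<1$.

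Finally, for the moreover I would apply pigeonhole to the $|S|$ non-negative weighted summands: since their sum is at most $\alpha(S,p,\epsilon)$, at least one index $s(y)\in S$ satisfies $p(s(y))\,\|(I-v_{s(y)}v_{s(y)}^T/\|v_{s(y)}\|^2)y\|^2\leq\alpha(S,p,\epsilon)/|S|$, and dividing through by $p(s(y))>0$ yields the stated inequality.
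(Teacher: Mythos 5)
Your proposal is correct, and its overall skeleton matches the paper's: show $D_{S,\,\epsilon}$ is compact, observe the pointwise strict inequality $f<1$ there, take the maximum to get $\alpha(S,\,p,\,\epsilon)<1$, and finish the ``moreover'' by pigeonhole exactly as the paper does. The one place where you genuinely diverge is the technical core, namely why $C_{S,\,\epsilon}$ is open. The paper fixes a point $(x_0,y_0)$ with $y_0$ near $h_{S,\,x_0}$, builds an auxiliary map $F$ by completing $\{x-P_{C_s}x\}_{s\in S}$ to a basis, and invokes the implicit function theorem to parameterize $h_{S,\,x}$ locally as a graph $y=g(x)$, then runs a triangle-inequality estimate to exhibit a product neighborhood inside $C_{S,\,\epsilon}$. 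You instead prove that $(x,y)\mapsto d(y,\,h_{S,\,x})$ is continuous on $\overset{*}{\mathcal{S}}_{S,\,\epsilon}\times S^{n-1}$ via the explicit Gram-matrix formula $P_{W_x}=I-V_x(V_x^TV_x)^{-1}V_x^T$, valid because excising $B_{S,\,\epsilon}$ guarantees $V_x$ has full column rank, and then read off $C_{S,\,\epsilon}$ as the open sublevel set $\{d<\epsilon\}$. Your route is more elementary and global: it avoids the choice of the auxiliary vectors $a_1,\dotsc,a_{n-\vert S\vert-1}$ and treats all of $C_{S,\,\epsilon}$ at once rather than point by point, at the cost of relying on the linear structure of the $C_s$ (the paper's IFT argument would survive smooth nonlinear constraint sets). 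Two small points to tidy up: your identity $d(y,h_{S,\,x})^2=2-2\Vert P_{W_x}y\Vert$ presumes the chordal metric and silently assumes $h_{S,\,x}\neq\emptyset$ (i.e., $\vert S\vert<n$ or $W_x\neq\{0\}$); in the degenerate case $h_{S,\,x}=\emptyset$ the set $C_{S,\,\epsilon}$ is empty for small $\epsilon$ and nothing is lost, but you should say so. Neither affects correctness.
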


\begin{proof}
To make our presentation clear, we can assume that the index set, $S=\{ 1,\dotsc, \vert S\vert\}$. Our first objective is to show that $C_{S,\,\epsilon}\subseteq \overset{*}{\mathcal{S}}_{S,\,\epsilon}\times S^{n-1}$ is open. 

Given $(x_0, y_0)\in \overset{*}{\mathcal{S}}_{S,\,\epsilon}\times S^{n-1}$ such that $x_0\in B_{S,\,\epsilon}$ and $y_0\in h_{S,\,x_0}$, one can find $a_1, \dotsc, a_{n-\vert S\vert-1}$ such that
\[
\left\{x_0-P_{C_1}x_0,\dotsc, x_0-P_{C_{\vert S\vert}}x_0, a_1,\dotsc, a_{n-\vert S\vert-1}, y \right\} \subseteq \mathbb{R}^n
\]
is a basis. One can consider $F: \mathbb{R}^{2n}\rightarrow \mathbb{R}^n$ defined as
\[
F(x, y) = \begin{bmatrix}
(x- P_{C_1}x)^Ty \\
\vdots \\
(x- P_{C_\vert S\vert}x)^Ty \\[3pt]
a_1^Ty \\
\vdots \\
a_{n-\vert S\vert -1}^Ty \\[3pt]
\sum_{i=1}^n y_i^2 -1 \\
\end{bmatrix}.
\]
Denote by $DF: \mathbb{R}^{2n}\rightarrow \mathbb{R}^{n\times 2n}$ the Jacobian matrix of $F$, and denote by $D_xF$, $D_yF$ the submatrices when the partial derivatives are only taken with respect to $x$, $y$, respectively. A direct calculation shows
\[
DF(x_0, y_0) = \begin{bmatrix} D_xF(x_0, y_0) & D_yF(x_0, y_0)
\end{bmatrix} = \begin{bmatrix}
y^T(I - P_{C_1}) & x^T(I - P_{C_1})^T \\
\vdots & \vdots \\
y^T(I - P_{C_{\vert S\vert}}) & x^T(I - P_{C_{\vert S\vert}})^T \\
0 & a_1^T \\
\vdots  & \vdots\\
0 & a_{n-\vert S\vert -1}^T \\[3pt]
0 & 2y^T
\end{bmatrix}\in \mathbb{R}^{n\times 2n},
\]
and it is clear from our choice that $D_yF(x_0, y_0)$ is of full rank. If one considers the level set 
\[
\left\{ (x, y)\in\mathbb{R}^{2n}: F(x,y) = 0 \right\},
\]
then there exists an open ball $B(x_0)\subseteq \mathbb{R}^n$ such that $y$ is a function of $x$ and $y(x_0) = y_0$ by the implicit function theorem. 

Given $(x_0,y_0)\in C_{S,\,\epsilon}$, there exists $y_*\in h_{S,\,x_0}$ such that
\[
d(y_0, y_*) = d(y_0, h_S(x_0)) = \epsilon' < \epsilon,
\]
since $\{y_0\}\subseteq S^{n-1}$ is compact and $h_S(x_0)\subseteq S^{n-1}$ is closed. By the argument above, there exists $\delta_1 > 0$, and a diffeomorphism $g:B_{\delta_1}(x_0)\rightarrow g(B_{\delta_1}(x_0))$ such that $g(x_0) = y_*$ and
\[
F(x, g(x)) = 0, \quad x\in B_{\delta_1}(x_0);
\]
i.e., $y$ is a function of $x$ in a neighborhood of $x_0$. By continuity of $g:B_{\delta_1}(x_0)\rightarrow g(B_{\delta_1}(x_0))$, there exists $\delta_2>0$ such that 
\[
d(x, x_0) < \delta_2 \quad\Rightarrow\quad d(g(x), g(x_0)) < \frac{\epsilon - d(y_0, y_*)}{4}.
\]
Define $\delta_3 = (\epsilon - d(y_0, y_*))/4$, and let $\delta = \min\{\delta_1, \delta_2, \delta_3\}$. We claim that $B_{\delta}(x_0)\times B_{\delta}(y_0) \subseteq C_{S,\,\epsilon}$, and so $(x_0, y_0)\in \overset{*}{\mathcal{S}}_{S,\,\epsilon}\times S^{n-1}$ is an interior point. For $(x,y)\in B_{\delta}(x_0)\times B_{\delta}(y_0)$,
\[
d(y, g(x)) \leq d(y, y_0) + d(y_0, y_*) + d(y_*, g(x)) < \epsilon.
\]
Finally, one has
\[
d(y, h_S(x)) \leq d(y, g(x)) < \epsilon,
\]
since $g(x)\in h_S(x)$, and this shows that $(x_0, y_0)\in \overset{*}{\mathcal{S}}_{S,\,\epsilon}\times S^{n-1}$ is an interior point. 

One has $D_{S,\,\epsilon}$ is compact, since $C_{S,\,\epsilon}\subseteq \overset{*}{S}_{\epsilon}\times S^{n-1}$ is open. One also has that $f(S,\,x,\,p,\, y) < 1$ for all $(x, y)\in D_{S,\,\epsilon}$, and therefore there exists $\alpha(S,\,p,\,\epsilon) < 1$ such that 
\[
f(S,\,x,\,p,\,y) \leq \alpha(S,\,p,\,\epsilon) < 1, \quad\forall (x,y)\in D_{S,\,\epsilon}.
\]
Finally, by pigeonhole principle, there exists $s(y)\in S$ such that 
\[
\vert S\vert\, p(s(y))\, \left\Vert \left(I - \frac{(x-P_{C_{s(y)}}x)(x-P_{C_{s(y)}}x)^T}{\Vert x - P_{C_{s(y)}}x\Vert^2} \right) y\right\Vert^2 \leq \alpha(S,\,p,\,\epsilon)
\]
for fixed $(x,y)\in D_{S,\,\epsilon}$. This concludes our proof.
\end{proof}

\begin{figure}[ht]
    \centering
    \includegraphics[width=119mm]{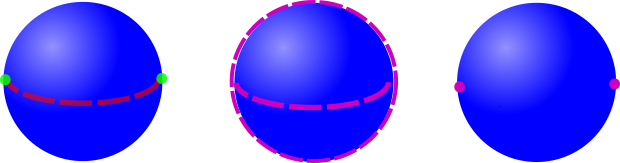}
    \caption{Suppose that  $C_1$ is a line intersecting the sphere at the green dots (left), and $C_2$ is a plane intersecting the sphere at the red (dashed) diametric circle. Then, the pink diametric (dashed) circles in the second sphere are the points we remove in the $x$-sphere, and the pink dots in the third sphere are the points we remove in the $y$-sphere mentioned in Theorem~\ref{maintheorem1}. More precisely, a neighborhood of those points is removed.} 
    \label{mainfig1}
\end{figure}

As a corollary, we demonstrate how Theorem~\ref{maintheorem1} implies the convergence of the classical randomized Kaczmarz algorithm. Indeed, the classical setting can be viewed as the scenario where one has $M=m$ local clients, and each local client has one equation of the linear system $Ax=b$; see \cite{huang2024randomized}. This corresponds to $S = [M]$ and $\{C_s\}_{s\in [M]}$ are $n-1$-dimensional linear subspaces (hyperplanes) in Theorem~\ref{maintheorem1}. 

Recall that we defined $\mathcal{P}_S$ in (\ref{maineq1}). Given a sampling scheme $p\in \mathcal{P}_{[M]}$, if we denote by $a_s$ the normal vector (unique up to sign) of $C_s$ for $s\in [M]$, the randomized Kaczmarz algorithm is defined by 
\begin{equation}\label{maineq4}
\mathbb{P} \left[ \left. Y_{k+1} = \left(I - a_s\, a_s^T \right) Y_k \right\vert Y_k \right] = p(s), \quad s\in [M].
\end{equation}
We have the following:

\begin{corollary}\label{maincor1}
Given an initial $y_0\in\mathbb{R}^n$ and a sampling scheme $p\in\mathcal{P}_{[M]}$ defined in (\ref{maineq1}), define $Y_1,\dotsc, Y_k,\dotsc$ by (\ref{maineq4}). Let $\alpha = \max_{y\in S^{n-1}} y^T \sum_{s\in [M]} p(s) \left(I - a_s\, a_s^T \right) y$.
Then  
\[
\mathbb{E}\left( \left. \Vert Y_{k+1}\Vert^2 \, \right\vert y_0\right) \leq \alpha^{k+1} \Vert y_0\Vert^2.
\]
\end{corollary}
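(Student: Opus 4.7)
The plan is to reduce the classical randomized Kaczmarz iteration to a rank-one specialization of the framework developed in Theorem~\ref{maintheorem1}. Under Assumption~\ref{introassumption2}, each single-equation client corresponds to a hyperplane $C_s = \{x \in \mathbb{R}^n : a_s^T x = 0\}$ with unit normal $a_s$. The key observation is that for any $x \notin C_s$, the residual $x - P_{C_s} x = (a_s^T x) a_s$ is a scalar multiple of $a_s$, so the rank-one projection appearing in (\ref{maineq3}) collapses to
\[
\frac{(x - P_{C_s}x)(x - P_{C_s}x)^T}{\Vert x - P_{C_s}x\Vert^2} = a_s a_s^T,
\]
and the update operator $I - a_s a_s^T$ is independent of $x$. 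In particular $f([M], x, p, y)$ reduces to the quadratic form $y^T \bigl( \sum_{s} p(s)(I - a_s a_s^T) \bigr) y$, recovering exactly the expected one-step squared norm of the classical RK update.

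Next I would compute one step of the iteration. Conditioning on $Y_k$ and using that $I - a_s a_s^T$ is an orthogonal projection,
\[
\mathbb{E}\bigl[\Vert Y_{k+1}\Vert^2 \,\big\vert\, Y_k\bigr] = \sum_{s \in [M]} p(s) \Vert (I - a_s a_s^T) Y_k\Vert^2 = Y_k^T \Bigl( \sum_{s \in [M]} p(s)(I - a_s a_s^T) \Bigr) Y_k.
\]
Homogeneity in $y$, together with the definition of $\alpha$ as the maximum of this quadratic form over $S^{n-1}$, yields $\mathbb{E}[\Vert Y_{k+1}\Vert^2 \mid Y_k] \leq \alpha \Vert Y_k\Vert^2$: on the event $Y_k \neq 0$ one divides by $\Vert Y_k\Vert^2$ to get a unit vector on which the bound applies, and on $\{Y_k = 0\}$ the inequality is trivial.

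Finally I would iterate with the tower property:
\[
\mathbb{E}[\Vert Y_{k+1}\Vert^2 \mid y_0] = \mathbb{E}\bigl[\mathbb{E}[\Vert Y_{k+1}\Vert^2 \mid Y_k] \,\big\vert\, y_0\bigr] \leq \alpha\, \mathbb{E}[\Vert Y_k\Vert^2 \mid y_0],
\]
and induction on $k$ produces $\mathbb{E}[\Vert Y_{k+1}\Vert^2 \mid y_0] \leq \alpha^{k+1} \Vert y_0\Vert^2$. The only conceptual point is how Theorem~\ref{maintheorem1} enters: in the rank-one classical setting the $x$-dependence of $f$ disappears entirely, so the delicate compactness and implicit-function-theorem arguments used in Theorem~\ref{maintheorem1} to handle the degenerate set $B_S$ and produce a strict upper bound $\alpha(S,p,\epsilon) < 1$ collapse to the elementary fact that the continuous map $y \mapsto y^T \sum_s p(s)(I - a_s a_s^T) y$ attains its maximum $\alpha$ on the compact sphere $S^{n-1}$. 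I do not anticipate a serious obstacle; the main substantive step is the algebraic identification of the RK operator $I - a_s a_s^T$ with the general $x$-dependent projection in (\ref{maineq3}), after which the bound is an immediate consequence of the maximum principle and the tower property.
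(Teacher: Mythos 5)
Your proposal is correct, and the central computation coincides with the paper's: condition on $Y_k$, write the one-step expected squared norm as $\sum_{s} p(s)\Vert(I-a_s a_s^T)Y_k\Vert^2$, use homogeneity to reduce to the unit sphere, bound by the maximum $\alpha$ of the quadratic form $y\mapsto y^T\sum_s p(s)(I-a_s a_s^T)y$, and iterate via the tower property. The only difference in route is how $\alpha$ is justified: the paper deliberately frames the corollary as an application of Theorem~\ref{maintheorem1}, first verifying that in the hyperplane case $h_{S,x}=\emptyset$ so that $D_{S,\epsilon}=\overset{*}{\mathcal{S}}_{S,\epsilon}\times S^{n-1}$, invoking the compactness argument there to get $\alpha([M],p,\epsilon)<1$, and only at the end identifying this constant with the variational formula; you instead observe that the $x$-dependence of $f$ collapses entirely (since $x-P_{C_s}x\in\operatorname{span}\{a_s\}$) and obtain $\alpha$ directly as the maximum of a continuous quadratic form on the compact sphere. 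Your version is more self-contained and, strictly speaking, does not need Theorem~\ref{maintheorem1} at all to establish the stated inequality (which does not even require $\alpha<1$); what it gives up is precisely the paper's pedagogical point, namely exhibiting classical RK convergence as a degenerate instance of the general federated framework. Both arguments are sound; note also that your identity $\Vert(I-a_sa_s^T)Y_k\Vert^2=Y_k^T(I-a_sa_s^T)Y_k$ implicitly uses $\Vert a_s\Vert=1$, which is consistent with the paper's convention that $a_s$ is the unit normal of $C_s$.
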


\begin{proof}
First, we describe what $D_{S,\,\epsilon}$ in Theorem~\ref{maintheorem1} is in this scenario. By picking $\epsilon>0$ small enough, one has that $\overset{*}{\mathcal{S}}_{S,\,\epsilon}\neq \emptyset$. Let $\{a_s\}_{s\in [M]}$ be the normal vectors of the hyperplanes $\{C_s\}_{s\in [M]}$. One has 
\[
x - P_{C_s}x \in \operatorname{span}\{a_s\}, \quad\forall x\in \overset{*}{\mathcal{S}}_{S,\,\epsilon},\, s\in [M],
\]
and
\[
\bigcap_{s\in [M]} \left\{y\in \mathbb{R}^n: (x-P_{C_s}x)^T\, y = 0 \right\} = \bigcap_{s\in [M]} \left\{y\in \mathbb{R}^n: a_s^T\, y = 0 \right\} = \{0\}; 
\]
this implies that $h_{S,\,x} = S^{n-1}\cap \{0\} = \emptyset$, $C_{S,\,\epsilon} = \emptyset$ and therefore $D_{S,\,\epsilon} = \overset{*}{\mathcal{S}}_{S,\,\epsilon}\times S^{n-1}$.

Now, we study the convergence of the randomized Kaczmarz algorithm. By (\ref{maineq4}), 
\begin{align*}
\mathbb{E}\left( Y_{k+1} \vert Y_k\right) &= \sum_{s\in [M]} p(s) \left\Vert \left(I - a_s\, a_s^T \right) Y_k \right\Vert^2 \\[3pt]
&= \Vert Y_k\Vert^2\sum_{s\in [M]} p(s) \left\Vert \left(I - a_s\, a_s^T \right) \frac{Y_k}{\Vert Y_k\Vert} \right\Vert^2.
\end{align*}
Pick some $x \in \overset{*}{\mathcal{S}}_{S,\,\epsilon}$. We recognize the summation above is equal to $f([M],\,x,\,p,\, Y_k/\Vert Y_k\Vert)$. By Theorem~\ref{maintheorem1}, the function, $f([M],\,\cdot,\, p,\,\cdot): D_{S,\,\epsilon}\rightarrow\mathbb{R}$, satisfies
\[
f([M],\,x,\,p,\,y) = \sum_{s\in [M]} p(s) \left\Vert \left(I - a_s\,a_s^T \right) y\right\Vert^2 \leq \alpha([M],\,p,\,\epsilon) < 1, \quad\forall y\in S^{n-1}.
\]
(Note: the function is independent of $x\in \overset{*}{\mathcal{S}}_{S,\,\epsilon}$ here). Therefore,
\[
\mathbb{E}\left( Y_{k+1} \vert Y_k\right) = \Vert Y_k\Vert^2\,f([M],\,x,\,p,\, Y_k/\Vert Y_k\Vert) \leq \Vert Y_k\Vert^2\,\alpha([M],\,p,\,\epsilon)
\]
by which one can iterate to conclude $\mathbb{E}\left( \left. \Vert Y_{k+1}\Vert^2 \, \right\vert y_0\right) \leq \alpha^{k+1} \Vert y_0\Vert^2$. 

Finally, a closer look shows that
\[
\alpha([M],\,p,\,\epsilon) = \max_{y\in S^{n-1}}\sum_{s\in [M]} p(s) \left\Vert \left(I - a_s\,a_s^T \right) y\right\Vert^2 = \max_{y\in S^{n-1}} y^T \sum_{s\in [M]} p(s) \left(I - a_s\, a_s^T \right) y,
\]
which gives the variational formula for the constant $\alpha$. This concludes our proof.
\end{proof}

In Corollary~\ref{maincor1}, we showed the convergence of the randomized Kaczmarz algorithm for all sampling schemes in $\mathcal{P}_{[M]}$. This includes the sampling scheme in \cite{strohmer2009randomized}. Indeed, if we denote $p_{SV}$ the sampling scheme in \cite{strohmer2009randomized},
\[
p_{SV}(s) = \frac{\Vert a_s\Vert_2^2}{\Vert A\Vert_F^2} \quad\Rightarrow\quad \operatorname{supp}(p_{SV}) = [M],
\]
which clearly implies that $p_{SV}\in \mathcal{P}_{[M]}$. This concludes our discussion here.

As a second corollary, we demonstrate how one can produce different variations of Theorem~\ref{maintheorem1}; the following form will be used in the study of Algorithm~\ref{algorithm1}.

\begin{corollary}\label{maincor2}
Given $S\subseteq [M]$ and $\epsilon>0$, denote by $u_S$ the uniform distribution over the set $S$ and $D_{S,\,\epsilon}' = \cap_{s\in S}D_{\{s\},\,\epsilon}$, where $D_{\{s\},\,\epsilon}$'s are defined in Theorem~\ref{maintheorem1}. Consider $f(S,\,\cdot,\,u_S,\,\cdot): D_{S,\,\epsilon}' \rightarrow \mathbb{R}$ defined as in Theorem~\ref{maintheorem1}. Then, $f(S,\,x,\,u_S,\,y) \leq \alpha'(S,\,\epsilon) < 1$ for $(x,y)\in D_{S,\,\epsilon}'$. Moreover, for $s\in S$, define
\[
E_{S,s, \epsilon} = \left\{(x, y)\in D_{\{s\}, \epsilon}: x \not\in \bigcup_{s\in S} C_s\right\},
\]
where each $C_s$ is the linear subspace associated with the index $s\in S$. Then
\[
f(S,x,u_S,y) \leq \frac{\vert S\vert -1}{\vert S\vert} + \frac{1}{\vert S\vert} \alpha'\left(\{s\},\epsilon\right) = \gamma(s,\epsilon) < 1, \quad (x, y) \in E_{S, s, \epsilon}.
\]
\end{corollary}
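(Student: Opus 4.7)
The plan is to extract both bounds directly from Theorem~\ref{maintheorem1} applied to singleton index sets. For the first inequality, I would observe that by construction $D_{S,\epsilon}' = \cap_{s\in S} D_{\{s\},\epsilon}$, so on $D_{S,\epsilon}'$ the hypotheses of Theorem~\ref{maintheorem1} with index set $\{s\}$ and the (necessarily uniform) measure $u_{\{s\}}$ are satisfied for every $s\in S$. Theorem~\ref{maintheorem1} then yields, for each $(x,y)\in D_{S,\epsilon}'$ and each $s\in S$,
\[
\left\Vert \left(I - \frac{(x-P_{C_s}x)(x-P_{C_s}x)^T}{\Vert x-P_{C_s}x\Vert^2}\right) y \right\Vert^2 \leq \alpha(\{s\},u_{\{s\}},\epsilon) < 1.
\]
Averaging both sides with weights $1/|S|$ over $s\in S$ then produces the claimed bound, with (say) $\alpha'(S,\epsilon) := \max_{s\in S}\alpha(\{s\},u_{\{s\}},\epsilon) < 1$.

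For the second inequality, fix $s\in S$ and $(x,y)\in E_{S,s,\epsilon}$. First I would verify that the exclusion $x\notin \bigcup_{s'\in S} C_{s'}$ forces $x-P_{C_{s'}}x \neq 0$ for every $s'\in S$, so that $f(S,x,u_S,y)$ is well-defined in the sense of (\ref{maineq3}). Then I would split the defining sum into the term indexed by $s$ and the remaining $|S|-1$ terms,
\[
f(S,x,u_S,y) = \frac{1}{|S|}\left\Vert\left(I-\frac{(x-P_{C_s}x)(x-P_{C_s}x)^T}{\Vert x-P_{C_s}x\Vert^2}\right)y\right\Vert^2 + \frac{1}{|S|}\sum_{s'\in S\setminus\{s\}}\left\Vert\left(I-\frac{(x-P_{C_{s'}}x)(x-P_{C_{s'}}x)^T}{\Vert x-P_{C_{s'}}x\Vert^2}\right)y\right\Vert^2.
\]
The $s$-th term is bounded above by $\alpha'(\{s\},\epsilon)/|S|$ via the first part of this corollary applied to the singleton $\{s\}$ (since $(x,y)\in D_{\{s\},\epsilon}$ by hypothesis), while each of the remaining $|S|-1$ terms is the squared norm of an orthogonal projection of a unit vector and is therefore at most $\Vert y\Vert^2 = 1$. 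Assembling these estimates gives exactly the asserted bound, and strict inequality $\gamma(s,\epsilon) < 1$ follows from $\alpha'(\{s\},\epsilon) < 1$.

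I do not expect a genuine obstacle here: the domain $D_{S,\epsilon}'$ is engineered precisely so that the singleton version of Theorem~\ref{maintheorem1} applies pointwise, and $E_{S,s,\epsilon}$ localizes a single coordinate where the sharp estimate is available while letting the others absorb the trivial bound. The one small point that deserves care is the admissibility $x\in \cap_{s'\in S}C_{s'}^c$ required by the very definition of $f$, which is automatic on $D_{S,\epsilon}'$ from the singleton domain conditions and holds by explicit exclusion on $E_{S,s,\epsilon}$.
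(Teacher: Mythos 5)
Your proposal is correct and follows essentially the same route as the paper's proof: the first bound comes from applying Theorem~\ref{maintheorem1} to each singleton $\{s\}$ on $D_{S,\,\epsilon}'$ and taking the maximum of the resulting constants, and the second comes from isolating the $s$-th term of the sum (bounded by $\alpha'(\{s\},\epsilon)$ on $D_{\{s\},\,\epsilon}$) while bounding the remaining $\vert S\vert-1$ projection terms trivially by $1$. Your explicit check that $x\notin\bigcup_{s'\in S}C_{s'}$ makes $f$ well-defined on $E_{S,s,\epsilon}$ is a point the paper only states in passing, so including it is a mild improvement in rigor.
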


\begin{proof}
To make our presentation clear, we assume $S = \left\{1,\dotsc, \vert S\vert \right\}$ without loss of generality. Given $s\in S$ and $\delta_s\in \mathcal{P}_{x,\,\{s\}}$, by Theorem~\ref{maintheorem1} there exists $\alpha(\{s\},\,\delta_s,\, \epsilon) < 1$ such that the function $f(\{s\},\,\cdot,\, \delta_s,\, \cdot): D_{\{s\},\,\epsilon} \rightarrow \mathbb{R}$ defined as
\[
f(\{s\},\,x,\, \delta_s,\, y) = \left\Vert \left(I - \frac{(x-P_{C_s}x)(x-P_{C_s}x)^T}{\Vert x - P_{C_s}x\Vert^2} \right) y\right\Vert^2.
\]
satisfies the bound $f(\{s\},\,x,\, \delta_s,\, y)\leq \alpha(\{s\},\,\delta_s,\, \epsilon) < 1$. In particular, we have
\[
f(\{s\},\,x,\, \delta_s,\, y)\leq \alpha(\{s\},\,\delta_s,\, \epsilon) \quad\forall (x,y)\in D_{S,\epsilon}',\, s\in S,
\]
which implies 
\begin{align*}
f(S,\,x,\,u_S,\,y) &= \sum_{s\in S} \frac{1}{\vert S\vert} \left\Vert \left(I - \frac{(x-P_{C_s}x)(x-P_{C_s}x)^T}{\Vert x - P_{C_s}x\Vert^2} \right) y\right\Vert^2 \\[5pt]
&= \sum_{s\in S} \frac{1}{\vert S\vert} f(\{s\},\,x,\, \delta_s,\, y) \leq \max_{s\in S} \left\{ \alpha(\{s\},\,\delta_s,\, \epsilon) \right\} < 1.
\end{align*}
This concludes the proof of the first statement. For the second statement, we see that $f(S,\cdot, u_S,\cdot)$ is properly defined on $E_{S,s,\epsilon}$, and
\begin{multline}\label{maineq6}
f(S,x,u_S,y) = \sum_{s'\neq s} \frac{1}{\vert S\vert} \left\Vert \left(I - \frac{(x-P_{C_{s'}}x)(x-P_{C_{s'}}x)^T}{\Vert x - P_{C_{s'}}x\Vert^2} \right) y\right\Vert^2 \\
+ \frac{1}{\vert S\vert} \left\Vert \left(I - \frac{(x-P_{C_s}x)(x-P_{C_s}x)^T}{\Vert x - P_{C_s}x\Vert^2} \right) y\right\Vert^2.
\end{multline}
Note that 
\[
\left\Vert \left(I - \frac{(x-P_{C_{s'}}x)(x-P_{C_{s'}}x)^T}{\Vert x - P_{C_{s'}}x\Vert^2} \right) y\right\Vert^2 \leq 1 \quad \forall (x,y)\in E_{S,s,\epsilon},
\]
and by the first statement
\[
\left\Vert \left(I - \frac{(x-P_{C_s}x)(x-P_{C_s}x)^T}{\Vert x - P_{C_s}x\Vert^2} \right) y\right\Vert^2 \leq \alpha'(\{s\},\epsilon) \quad\forall (x,y)\in D_{\{s\},\epsilon}, 
\]
which can then be combined with (\ref{maineq6}) to conclude the proof. 
\end{proof}

\subsection{Proof of Theorem~\ref{introthm2}, Theorem~\ref{introthm1} and Corollary~\ref{introcor1}}

\emph{Roadmap of the proof for Theorem~\ref{introthm2}: }  When the local clients run only finitely many iterations, the local updates are not exactly orthogonal projections onto a linear subspace; therefore, the idea of the proof is different from the proof of Theroem~\ref{introthm1}. For Theorem~\ref{introthm2}, the key observation is that when running only one global iteration ($\tau_g=1$), the global update is essentially randomly selecting one of the local updates. From this perspective, one can compare the scheme with the classic RK algorithm applying to a suitable overdetermined linear system to deduce the convergence. 

\begin{proof}[Proof of Theorem~\ref{introthm2}]
Denote $\mathbb{E} \left(X^{(t+1)} \vert S, X^{(t)}\right)$ the expected global position conditioning on the subset $S$ of local clients that is selected. One can observe that 
\begin{equation}\label{maineq8}
\mathbb{E} \left(X^{(t+1)} \vert S, X^{(t)}\right) = \sum_{s\in S} \frac{1}{\vert S\vert} X^{(t, T)}_s,  
\end{equation}
where $X^{(t, T)}_s$ is the position of client $s$ after $T$ local iterations. The idea of the proof is to compare the dynamics with a suitable choice of RK algorithm.

Fix $s\in [M]$. Denote $a_{s, 1},\dotsc, a_{s, m(s)}$ the rows at client $s$. Then 
\begin{equation}\label{maineq9}
\mathbb{E}\Vert X^{(t, T)}_s\Vert \leq \mathbb{E}\Vert X^{(t, 1)}_s\Vert = \frac{1}{m(s)}\sum_{i=1}^{m(s)} \left\Vert\left( I - \frac{a_{s, i}^T a_{s, i}}{\Vert a_{s,i}\Vert^2}\right)X^{(t, 0)} \right\Vert.
\end{equation}
Using (\ref{maineq9}), we have
\begin{align*}
\mathbb{E}\Vert X^{(t+1)}\Vert &= \sum_S \mathbb{E} \left(\Vert X^{(t+1)}\Vert \vert S\right) \mathbb{P}\left(S^{(t)} = S \right) \\
&= \sum_S \sum_{s\in S} \frac{1}{\vert S\vert} \mathbb{E}\Vert X^{(t, T)}_s\Vert \mathbb{P}\left(S^{(t)} = S \right) \\
&\leq \sum_S \sum_{s\in S} \frac{1}{\vert S\vert}    \frac{1}{m(s)}\sum_{i=1}^{m(s)} \left\Vert\left( I - \frac{a_{s, i}^T a_{s, i}}{\Vert a_{s,i}\Vert^2}\right)X^{(t, 0)} \right\Vert \mathbb{P}\left(S^{(t)} = S \right).
\end{align*}
One can see that the right hand side is the expected decrease when using RK to solve $A'x = 0$, where $A'$ consists of copies of $A$. One can then deduce the convergence using the convergence analysis of classic RK.   
\end{proof}

Given $S\subseteq [M]$, we consider $S_x \subseteq S$ defined as
\[
S_x = \left\{s\in S: x \not\in C_s\right\},
\]
and $u_{S_x}$ the uniform distribution over $S_x$.
Then we consider the following function
\begin{equation}\label{maineq7}
g(S,x,y) =
\begin{cases}
f(S_x,x,u_{S_x},y), \quad &\mbox{if $S_x\neq\emptyset$} \\[3pt]
1, \quad &\mbox{if $S_x = \emptyset$}
\end{cases}, \quad 
g(x, y) = \sum_{S\subseteq M} \frac{1}{{M\choose \vert S\vert}} g(S,\,x,\,y)
\end{equation}
One can see that $g(x,y)$ characterizes the convergence rate when $\tau_g=1$; for instance, see the proof of Corollary~\ref{maincor1}. We prove two lemmas, and then prove Theorem~\ref{introthm1} as a consequence of these lemmas. Let us briefly describe the proof strategy.

\emph{Roadmap of the proof for Theroem~\ref{introthm1}: } Fix a collection of linear subspaces at the local clients $\{C_s\}_{s=1}^M$. First, we demonstrate that for an overdetermined system, one can find a small enough $\epsilon>0$ such that the $\epsilon$-neighborhood of these linear subspaces has empty intersection on the unit sphere; this is done in Lemma~\ref{mainlem1}. Using Corollary~\ref{maincor2}, we know that if the initial position at a given federated round is $\epsilon$ away from some linear subspace $C_s$, then we are guaranteed to shrink the distance between our current position and the solution by a constant factor via projection onto $C_s$. By Lemma~\ref{mainlem1}, we know that an arbitrary point on the unit sphere is at least $\epsilon$ away from some linear subspace. At each federated round, there is always some chance to select a client $s$ with $C_s$ being $\epsilon$-away from the normalized global position, and we are guaranteed to shrink the current position by a uniform factor if we orthogonally project onto $C_s$. Using this fact, we show that on average we can shrink our current position by a constant factor regardless of our initial position at a given federated round; this is done in Lemma~\ref{mainlem2}. Finally, everything is put together with the scaling argument in Corollary~\ref{maincor1} to obtain Theorem~\ref{introthm1}. To deduce Corollary~\ref{introcor1} for the underdetermined system, the key observation is to decompose the initial position as $x = x_1 + x_2$, where $x_1\in C = \cap_s C_s$ and $x_2\in C^{\perp}$. One can see that $x_1$ is a fixed point for the orthogonal projections onto $C_s$'s, and that the orthogonal projections onto $C_s$'s restricted to $C^{\perp}$ reduce to a determined system; these observations allow us to deduce Corollary~\ref{introcor1} from Theorem~\ref{introthm1}.

\begin{lemma}\label{mainlem1}
Let $\{C_s\}_{s=1}^M$ be a collection of linear subspaces such that $\cap_{s=1}^M C_s = \{0\}$. Then there exits $\epsilon > 0$ such that 
\[
\bigcap_{s\in M} \left\{ x\in S^{n-1}: d(x,\,C_s\cap S^{n-1}) < \epsilon \right\} = \emptyset.
\]
\end{lemma}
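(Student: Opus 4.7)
The plan is a straightforward compactness/contradiction argument. I would assume for contradiction that the conclusion fails: for every $\epsilon > 0$, the intersection in question is nonempty. Applied along a sequence $\epsilon_k = 1/k$, this would yield a sequence $\{x_k\} \subseteq S^{n-1}$ such that $d(x_k,\,C_s\cap S^{n-1}) < 1/k$ for every $s\in [M]$ simultaneously.

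Next I would invoke compactness of $S^{n-1}$ to extract a convergent subsequence $x_{k_j}\to x_\infty \in S^{n-1}$. Since $M$ is finite, for each fixed $s$ the distance function $y\mapsto d(y,\,C_s\cap S^{n-1})$ is continuous, and $C_s\cap S^{n-1}$ is closed (as the intersection of a closed linear subspace with a closed sphere). Passing to the limit gives $d(x_\infty,\,C_s\cap S^{n-1}) = 0$ for every $s\in[M]$, hence $x_\infty \in C_s$ for every $s$. Thus $x_\infty \in \cap_{s=1}^M C_s = \{0\}$, contradicting $x_\infty \in S^{n-1}$, i.e., $\Vert x_\infty\Vert = 1$.

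There is really no hard step here; the only mild subtlety is making sure the sequence is extracted correctly (a single subsequence works because the finite intersection across $s\in [M]$ can be handled simultaneously without a diagonal argument), and that $C_s\cap S^{n-1}$ is indeed closed so the distance function interacts well with limits. Both of these are immediate, so I expect this lemma to be essentially an exercise in compactness; the real work of the paper happens downstream in Lemma~\ref{mainlem2} and the subsequent assembly into Theorem~\ref{introthm1}.
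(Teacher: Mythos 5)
Your proof is correct and is essentially the same argument as the paper's: both rest on compactness of $S^{n-1}$ together with continuity of the distance functions $x\mapsto d(x,\,C_s\cap S^{n-1})$. The paper phrases it directly---the continuous function $\tilde{d}(x)=\max_{s\in[M]} d(x,\,C_s\cap S^{n-1})$ is strictly positive on the compact sphere and hence bounded below by some $\epsilon>0$---whereas you argue by contradiction via a convergent subsequence, which is just the sequential form of the same compactness fact.
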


\begin{proof}
Define $\tilde{d}:S^{n-1}\rightarrow \mathbb{R}$ as
\[
\tilde{d}(x) = \max_{s\in S^{n-1}} d(x, C_s\cap S^{n-1});
\]
one can see that $\tilde{d}$ is continuous. Since $\cap_{s=1}^M C_s \cap S^{n-1} = \emptyset$, one has
\[
\tilde{d}(x) > 0 \quad\forall x\in S^{n-1}.
\]
This further implies that there exists $\epsilon > 0$ such that $\tilde{d}(x)\geq \epsilon$ for all $x\in S^{n-1}$ by compactness of $S^{n-1}$. This concludes our proof. 
\end{proof}

\begin{lemma}\label{mainlem2}
Given $0 < \epsilon \ll 1$, denote $K_{s,\epsilon} = \{x\in S^{n-1}: d(x, C_s\cap S^{n-1}) \geq \epsilon\}$ for all $ s\in [M]$. Then $g: S^{n-1}\times S^{n-1}\rightarrow \mathbb{R}$ defined in (\ref{maineq7}) satisfies
\[
g(x,y) \leq \beta < 1, \quad\forall (x,y)\in \bigcup_{s\in M} \left( K_{s,\epsilon}\times K_{s,\epsilon}\right).
\]
\end{lemma}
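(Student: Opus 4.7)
The plan is to combine a compactness argument with a pointwise strict bound extracted from Corollary~\ref{maincor2}. Each $K_{s,\epsilon}$ is closed in $S^{n-1}$ (being the complement of the open $\epsilon$-neighborhood of $C_s\cap S^{n-1}$), so the finite union $\bigcup_{s\in [M]}(K_{s,\epsilon}\times K_{s,\epsilon})$ is a compact subset of $S^{n-1}\times S^{n-1}$. The function $g$ from (\ref{maineq7}) is continuous on this set: the only potential singularities occur where $x\in C_s$ for some $s$ (so that the normal direction $x-P_{C_s}x$ vanishes), but these points are excluded by the $S_x$-bookkeeping in the definition of $g(S,x,y)$ together with the hypothesis $x\in K_{s,\epsilon}$. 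Thus it suffices to prove the strict inequality $g(x,y)<1$ pointwise on the compact set; the extreme value theorem then upgrades it to the uniform bound $g(x,y)\le \beta<1$ claimed.

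For the pointwise bound I would fix $(x_0,y_0)\in K_{s_0,\epsilon}\times K_{s_0,\epsilon}$. Because $x_0\in K_{s_0,\epsilon}=\overset{*}{\mathcal{S}}_{\{s_0\},\epsilon}$, applying the second statement of Corollary~\ref{maincor2} to every subset $S\ni s_0$ with $x_0\notin \bigcup_{s\in S}C_s$ yields $g(S,x_0,y_0)=f(S_{x_0},x_0,u_{S_{x_0}},y_0)\le \gamma(s_0,\epsilon)<1$. Every remaining subset in (\ref{maineq7}) contributes at most $1$, so the subsets containing $s_0$ drag the weighted sum defining $g(x_0,y_0)$ strictly below its trivial upper bound, which gives $g(x_0,y_0)<1$.

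The hard part will be bridging the gap between the geometric hypothesis "$y\in K_{s_0,\epsilon}$" (asserting $y$ is at distance at least $\epsilon$ from $C_{s_0}\cap S^{n-1}$) and the algebraic hypothesis "$(x,y)\in D_{\{s_0\},\epsilon}$" that Corollary~\ref{maincor2} requires, which demands $y$ be far from the typically larger hyperplane $h_{\{s_0\},x}\supseteq C_{s_0}$. When $\mathrm{codim}(C_{s_0})>1$ this containment is strict, and the singleton $\{s_0\}$ alone may not deliver the needed decrease. I would resolve this by exploiting the averaging structure in (\ref{maineq7}): one examines larger subsets $S$ in the sum, where the diversity of the normals $\{x_0-P_{C_s}x_0\}_{s\in S}$ --- forced by the global hypothesis $\bigcap_{s\in [M]}C_s=\{0\}$ inherited from the overdetermined system (Lemma~\ref{mainlem1}) --- guarantees that at least one such $S$ lands in the range of applicability of Corollary~\ref{maincor2}, delivering the pointwise strict contraction. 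Compactness then globalizes this pointwise decrease to the uniform constant $\beta<1$.
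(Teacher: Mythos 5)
Your architecture differs from the paper's, and it has two genuine gaps. The paper does not run a pointwise\mbox{-}strict\mbox{-}inequality\mbox{-}plus\mbox{-}compactness argument at the level of $g$ at all: it takes the uniform constant directly from Corollary~\ref{maincor2} (whose bound $\gamma(s,\epsilon)<1$ is already uniform over the compact set $E_{S,s,\epsilon}$), observes that $x\in K_{s,\epsilon}$ forces $s\in S_x$ for every subset $S\ni s$, bounds those ${M-1\choose \vert S\vert-1}$ terms by $\gamma_{s,\epsilon}$ and all remaining terms by $1$, and reads off $\beta_{s,\epsilon}=\frac{{M-1\choose\vert S\vert -1}}{{M\choose \vert S\vert}}\gamma_{s,\epsilon}+\frac{{M\choose \vert S\vert}-{M-1\choose\vert S\vert -1}}{{M\choose \vert S\vert}}<1$, finally maximizing over $s$. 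Your first gap is the claim that $g$ is continuous on $\bigcup_{s}(K_{s,\epsilon}\times K_{s,\epsilon})$: the definition (\ref{maineq7}) switches branches according to $S_x$, and $S_x$ changes as $x$ crosses $C_{s'}$ for $s'\neq s_0$; such points are \emph{not} excluded by $x\in K_{s_0,\epsilon}$, and at them both the weights $1/\vert S_x\vert$ and the set of surviving terms jump (the individual terms need not even have limits, since $(x-P_{C_{s'}}x)/\Vert x-P_{C_{s'}}x\Vert$ can oscillate as $x\to C_{s'}$). So the extreme value theorem is not available, and a merely pointwise bound $g<1$ cannot be upgraded to a uniform $\beta<1$ along the route you describe.

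The second gap is the one you flag yourself: Corollary~\ref{maincor2} requires $(x,y)\in D_{\{s_0\},\epsilon}$, i.e.\ $d(y,h_{\{s_0\},x})\geq\epsilon$ for the great subsphere $h_{\{s_0\},x}\supseteq C_{s_0}\cap S^{n-1}$, whereas the lemma only supplies $d(y,C_{s_0}\cap S^{n-1})\geq\epsilon$, which is strictly weaker when $\operatorname{codim}(C_{s_0})>1$. Your proposed repair --- that the diversity of normals forces some larger subset $S$ into the range of applicability of the corollary --- is an assertion rather than an argument, and it cannot work in general: with $M=2$, $C_1$ and $C_2$ lines in $\mathbb{R}^3$, $x\in C_2\cap K_{1,\epsilon}$ and $y\in h_{\{1\},x}\cap K_{1,\epsilon}$, one gets $S_x\subseteq\{1\}$ for every $S$ and every nontrivial term of $g(x,y)$ equals $1$, so no choice of $S$ helps. (In fairness, the paper's own proof silently assumes $(x,y)\in E_{S_x,s,\epsilon}$ at the same spot; what actually closes the argument is the way the lemma is used in Theorem~\ref{introthm1}, namely with $y=x$, for which $d(x,h_{\{s\},x})=\Vert x-P_{C_s}x\Vert$ is controlled by $d(x,C_s\cap S^{n-1})$.) As written, your proposal resolves neither issue, so it does not yet constitute a proof.
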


\begin{proof}
Given $x\in K_{s,\epsilon}$, one has
\[
s \in S \quad\Rightarrow\quad s \in S_x,
\]
which further implies
\[
g(x,y) = \sum_{S\subseteq M} \frac{1}{{M\choose \vert S\vert}} g(S,x,y) = \sum_{s\in S} \frac{1}{{M\choose \vert S\vert}} f(S_x,x,u_{S_x},y) + \sum_{s\not\in S} \frac{1}{{M\choose \vert S\vert}} g(S,x,y).
\]
By Corollary~\ref{maincor2}, one has 
\[
f(S_x,x,u_{S_x},y) \leq \gamma_{s,\epsilon} \quad\forall (x,y)\in E_{S_x,s,\epsilon},
\]
which can be combined with the fact that there are ${M-1\choose\vert S\vert -1}$ subsets of $M$ containing $s$ to get
\[
g(x,y) \leq \frac{{M-1\choose\vert S\vert -1}}{{M\choose \vert S\vert}} \gamma_{s,\epsilon} + \frac{{M\choose \vert S\vert}-{M-1\choose\vert S\vert -1}}{{M\choose \vert S\vert}} = \beta_{s,\epsilon} < 1, \quad (x,y)\in K_{s,\epsilon}\times K_{s,\epsilon}.
\]
One can then take $\beta_{\epsilon} = \max_{s\in M} \{\beta_{s,\epsilon}\}$ to conclude the proof.
\end{proof}

Now, we combine Lemma~\ref{mainlem1} and Lemma~\ref{mainlem2} to prove Theorem~\ref{introthm1}.

\begin{proof}[Proof of Theorem~\ref{introthm1}]
First, we assume that the server runs one global iteration ($\tau_g = 1$). Let $\epsilon>0$ be small enough so that Lemma~\ref{mainlem1} holds. For $K_{s,\epsilon}$ defined in Lemma~\ref{mainlem2}, one has
\[
X\setminus\left(\bigcup_{s\in [M]}K_{s,\epsilon}\right) = \bigcap_{s\in [M]}K_{s,\epsilon}^c = \emptyset
\]
by Lemma~\ref{mainlem1}; this shows that $\{K_{s,\epsilon}\}_{s\in [M]}$ covers $S^{n-1}$, and therefore 
\[
(x, x)\in \cup_{s\in [M]}\left( K_{s,\epsilon} \times K_{s,\epsilon}\right) \quad\forall x\in S^{n-1}.
\]
By Lemma~\ref{mainlem2}, one sees that
\[
\mathbb{E}\left( \left. \Vert X^{(t+1)}\Vert^2 \, \right\vert X^{(t)}\right) = g(X^{(t)}, X^{(t)}) \leq \beta_{\epsilon} < 1, \quad\mbox{if}\quad \Vert X^{(t)}\Vert = 1.
\]
One can then iterate as in the proof Corollary~\ref{maincor1} to conclude the proof. For general $\tau_g\in\mathbb{N}$, we observe that the operator norm of an orthogonal projection is less than or equal to $1$, and therefore the convergence result for general $\tau_g\in\mathbb{N}$ follows from $\tau_g=1$. 

\end{proof}

Finally, we demonstrate how one can deduce the convergence behavior for underdetermined systems.

\begin{proof}[Proof of Corollary~\ref{introcor1}]
Given an initial $X^{(0)}\in\mathbb{R}^n$, we consider the orthogonal decomposition $X^{(0)}= P_{C^{\perp}} X^{(0)} + P_{C} X^{(0)}$, where $C$ is defined as in the statement of Corollary~\ref{introcor1}. Then one has
\begin{align}
X^{(1)} &= \sum_{s\in S^{(0)}} \frac{1}{\vert S^{(0)}\vert} \left(I - \frac{(X^{(0)}-P_{C_s}X^{(0)})(X^{(0)}-P_{C_s}X^{(0)})^T}{\Vert X^{(0)} - P_{C_s}X^{(0)}\Vert^2} \right) X^{(0)} \\
&= \sum_{s\in S^{(0)}} \frac{1}{\vert S^{(0)}\vert} \left(I - \frac{(X^{(0)}-P_{C_s}X^{(0)})(X^{(0)}-P_{C_s}X^{(0)})^T}{\Vert X^{(0)} - P_{C_s}X^{(0)}\Vert^2} \right) P_{C^{\perp}} X^{(0)} + P_{C} X^{(0)},
\end{align}
since $C\subseteq C_i$ for all $i\in\{1,\dotsc, M\}$ and $P_CX^{(0)}$ is a fixed point for projections onto $C_i$'s. On the other hand, if we focus on the linear subspace $P_{C^{\perp}}(\mathbb{R}^n)$, $C^{\perp}\cap C = \{0\}$, and it is reduced to a determined system, and we can apply Theorem~\ref{introthm1} to deduce
\[
\Vert X^{(t+1)} - P_CX^{(0)}\Vert^2 \leq \beta^{t+1} \Vert P_{C^{\perp}}X^{(0)}\Vert^2 \leq \beta^{t+1} \Vert X^{(0)}\Vert^2, \quad t= 0,1,\dotsc.
\]
for some $0 < \beta < 1$.
\end{proof}

\section{Experiments}\label{experiments}

In the following, we demonstrate experimentally the convergence of FedRK, and we demonstrate its behavior when applied to sparse approximation problems and least squares problems. Finally, we apply Algorithm~\ref{algorithm3} to real data and compare the result with Lasso.

\subsection{FedRK}
In this section, we consider solving an overdetermined consistent system. We consider $A\in\mathbb{R}^{2048\times 1024}$, $x^*\in\mathbb{R}^{1024}$ and $b=Ax^*\in\mathbb{R}^{2048}$, where the entries of $A$ and $x$ are generated as i.i.d. standard Gaussian. The data is distributed evenly to $M=16$ clients. At each federated round, five local clients are selected to participate in the update, and each of them run $\tau$ local iterations. The server runs 20 global iterations after receiving the local updates. This demonstrates the convergence behavior of Algorithm~\ref{algorithm1}, and the results suggest that running more local iterations may improve the convergence rate. The results are in Figure~\ref{experimentsfig1}.

\begin{figure}[ht]
    \centering
    \includegraphics[width=119mm]{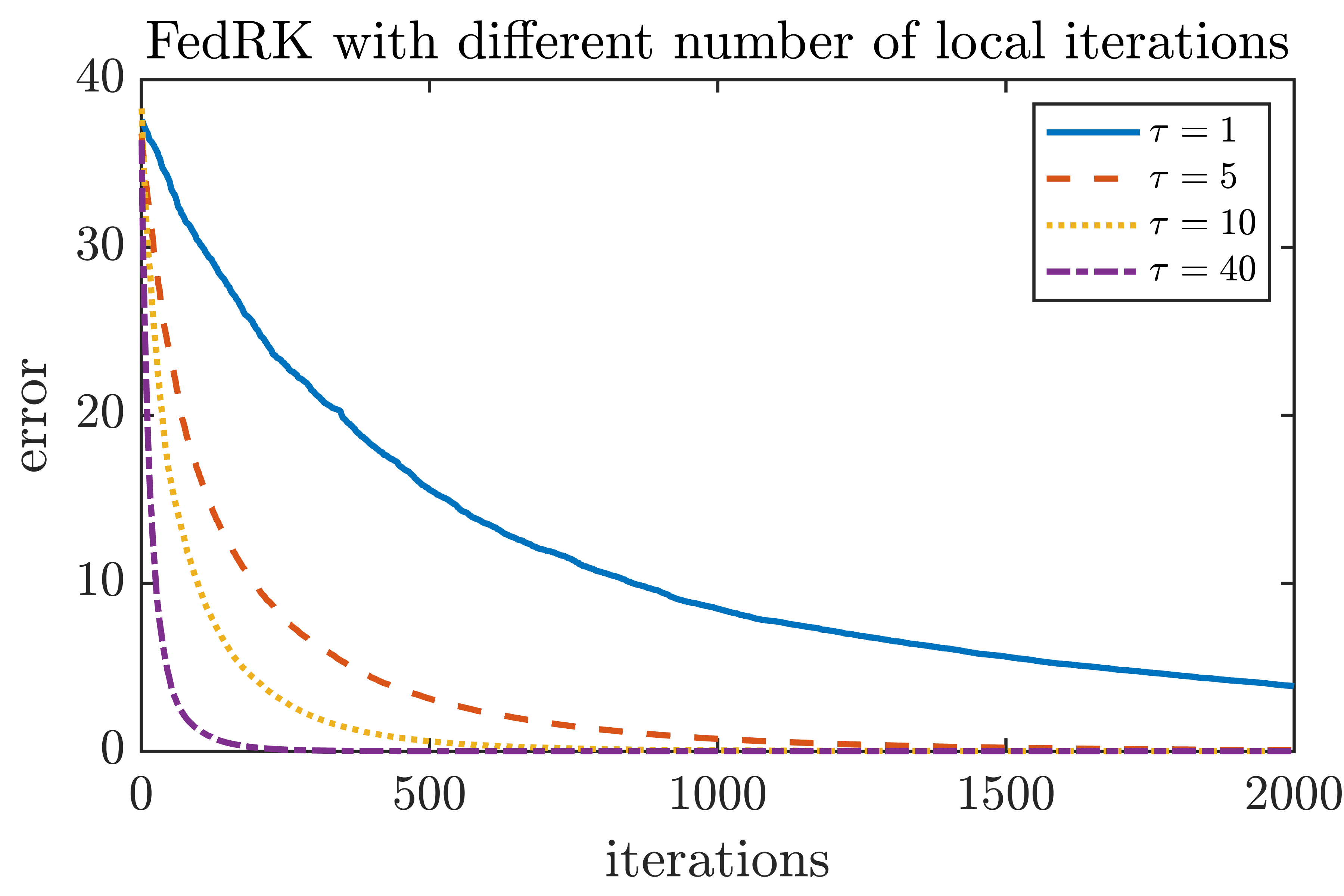}
    \caption{FedRK with clients running different numbers of local iterations}
    \label{experimentsfig1}
\end{figure}

\subsection{Sparse approximation problems}

In this section, we consider the sparse signal recovery problem. For certain types of signals, one can find a good basis so that these signals can be approximated by sparse representations, and an important question is how one can reconstruct the sparse approximations \cite{needell2009cosamp}. One type of algorithm for this problem is the iterative hard thresholding algorithm \cite{blumensath2008iterative}. Given a sparsity level $s\in\mathbb{N}$, the objective is to solve
\[
\min_{x\in\mathbb{R}^n} \Vert b -Ax\Vert_2^2 \quad\mbox{subject to}\quad \Vert x\Vert_0 \leq s, 
\]
where $\Vert\cdot \Vert_0$ counts the number of non-zero entries, and the iterative hard thresholding algorithm performs gradient descent followed by hard thresholding (projection onto the $s$ largest entries) in each iteration. Recently, it has been proposed to replace the gradient descent step with a RK projection \cite{jeong2023linear, zhang2015iterative}. Our experiment demonstrates such a strategy can be extended to the federated setting. We consider the measurement matrix $A\in\mathbb{R}^{256\times 1024}$, $x^*\in \mathbb{R}^{1024}$ the target sparse solution, $e\in \mathbb{R}^{256}$ the measurement noise, and $b= Ax^* + 0.01e\in\mathbb{R}^{256}$ the perturbed observation; the entries in $A$ and $e$, and the non-zero entries of $x_*$ are all generated from the standard Gaussian. Data is distributed evenly to $16$ clients, and at each federated round, $5$ local clients participate. Each of the local clients runs $20$ local RK iterations, and the global server runs $20$ global iterations after receiving the local updates. The sparsity level in this experiment is $9$, and we demonstrate that Algorithm~\ref{algorithm3} can recover the true signal. The experiment is performed 50 times with different initializations and sampling schemes, and we record the number of times each feature is selected in Figure~\ref{experimentsfig2}.

\begin{figure}[ht]
    \centering
    \includegraphics[width=119mm]{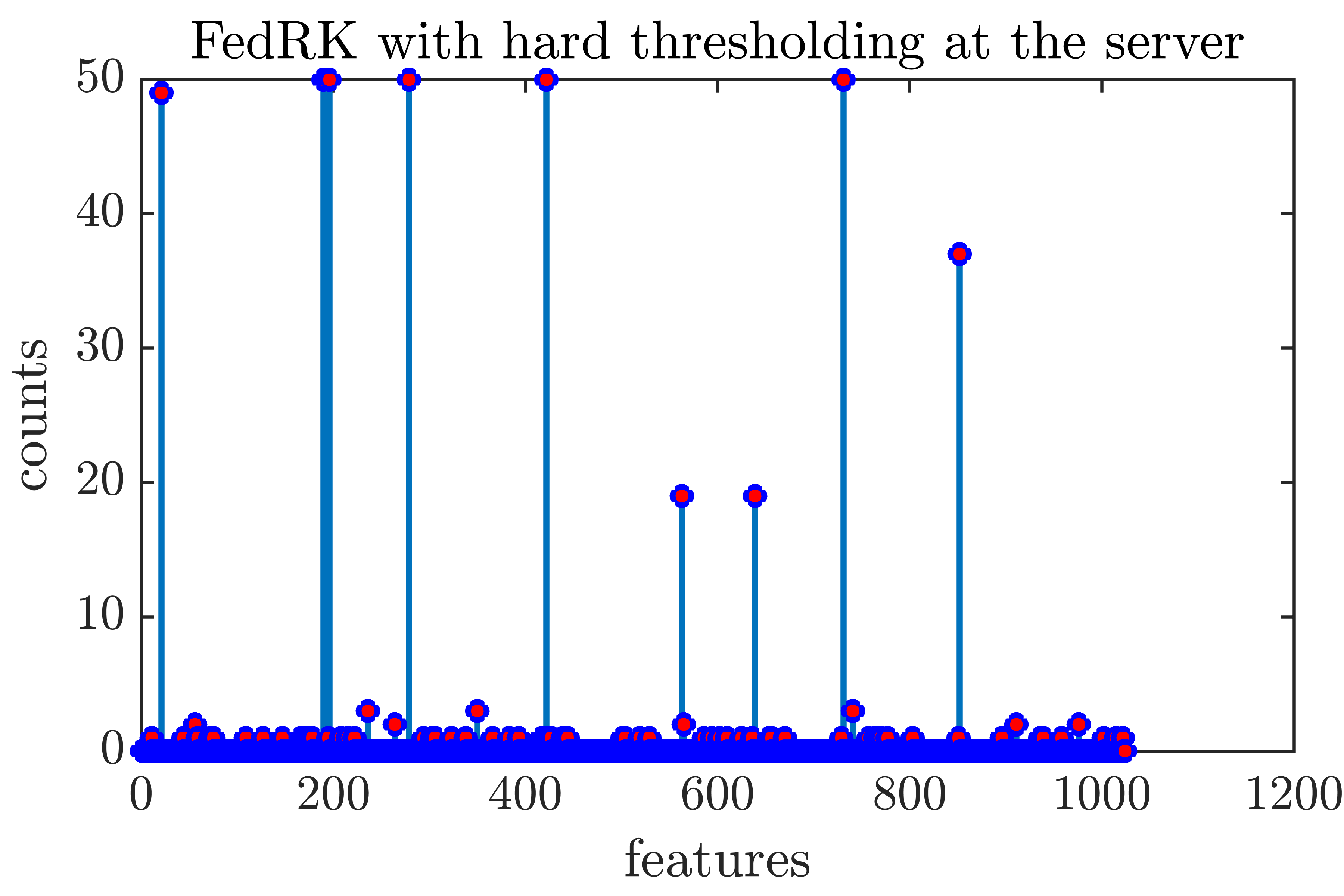}
    \caption{The number of times each feature was selected in the $50$ trials when Algorithm~\ref{algorithm3} with sparsity level $s=9$ was applied.}
    \label{experimentsfig2}
\end{figure}

\subsection{Least squares} 
While the main objective of this paper is to solve consistent linear system, in many real‐world applications the right‐hand side $b\in\mathbb{R}^m$ is corrupted by noise or modeling error and one instead faces an inconsistent system. To assess the robustness of our method under these more realistic conditions, we consider general least squares problems. When the linear system is inconsistent, recall that classical RK does not converge to the least squares solution; however, its iterates do eventually remain within some horizon of the least squares solution \cite{needell2010randomized}. We consider $A\in\mathbb{R}^{2048\times 256}$ and $b\in\mathbb{R}^{2048\times 1}$, where the entries in $A$ and $b$ are generated from standard Gaussian. Our aim is to solve $\min_x\Vert Ax-b\Vert^2$; data are distributed evenly to $M=16$ clients. In each federated round, all local clients participate. We apply FedRK to the system, and the algorithm converges to a horizon of the least squares solution; moreover, by adding a suitable amount of noise one can shrink the convergence horizon. Specifically, we expand the matrix $A$ by adding gaussian noise columns,
\[
A' = \begin{bmatrix} A & B
\end{bmatrix},
\]
where the entries in $B$ are generated as i.i.d. standard Gaussian, and we apply FedRK to solve the extended system $\min_x \Vert A'x-b\Vert^2$; note that the perturbation $B$ can be added at the clients level, and the strategy is suitable for the federated setting. To motivate our strategy, it is known that in high-dimensional space, two independent Gaussian vectors are almost orthogonal with high probability, and therefore the columns in $B$ are likely to be almost orthogonal to the column space of $A$; hence, the columns in $B$ capture the components of $b$ that is perpendicular to the column space of $A$. This is inspired by the randomized extended Kaczmarz algorithm \cite{zouzias2013randomized}, where column operations are involved to solve the least squares problem; however, column operations are not suitable in the federated setting, and therefore, we create the extra matrix $B$ instead. This is motivated to the approach taken in \cite{epperly2024randomized}, where the Monte Carlo method is combined with RK to solve the least squares problem. The strategy proposed in \cite{epperly2024randomized} does not involve column operations, and it would be interesting to see if it can be extended to the federated setting. The results are summarized in Figure~\ref{experimentsfig3}.

\begin{figure}[ht]
    \centering
    \includegraphics[width=119mm]{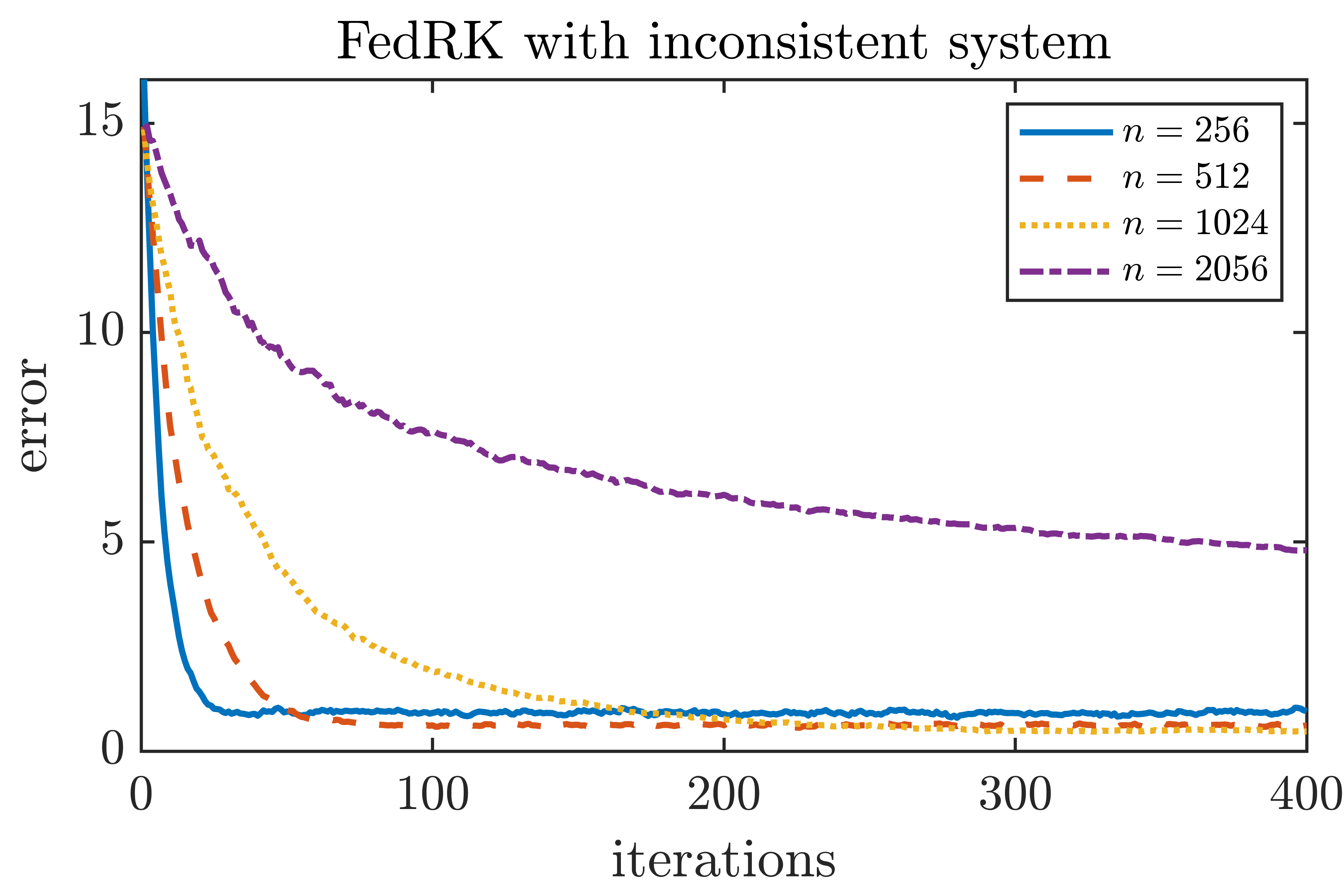}
    \caption{This experiment applies Algorithm~\ref{algorithm1} to the augmented system $A'x=b$ to solve the least squares problem, where $A'$ is obtained from $A$ by augmenting $n-256$ noisy columns. The result shows that we can shrink the convergence horizon by adding a suitable amount of noisy columns.}
    \label{experimentsfig3}
\end{figure}

\subsection{Prostate cancer data}

In this section, we test our algorithm on the prostate cancer data \cite{stamey1989prostate}, which was used in \cite{tibshirani1996regression}. The features are 1.~intercept (intcpt), 2.~log(cancer volume) (lcavol), 3.~log(prostate weight) (lweight), 4.~age, 5.~log(benign prostatic hyperplasia amount) (lbph), 6.~seminal vesicle invasion (svi), 7.~log(capsular penetration) (lcp), 8.~Gleason score (gleason) and 9.~percentage Gleason scores 4 or 5 (pgg45), and the target variable is log(prostate specific antigen) (lpsa). We adapt a more recent use of this data \cite{hastie2009elements}, where the Lasso is fit to the data. Let us briefly recall that given the data matrix $X\in\mathbb{R}^{N\times p}$ and the target vector $y\in\mathbb{R}^N$, Lasso solves 
\[
\hat{\beta} = \argmin_{\beta} \frac{1}{2}\sum_{i=1}^N (y_i - \sum_{j=1}^px_{ij}\beta_j)^2 + \lambda\sum_{j=1}^p \vert\beta_j\vert,
\]
where $\lambda\in\mathbb{R}_{\geq 0}$ is the regularizing parameter (here we assume that the intercept is included as the first column of $X$); it is known that when $\lambda$ is large, $\hat{\beta}$ tends to be sparse, and a feature is said to be selected if it has non-zero coefficients. In this case 4 features are selected: lcavol, lweight, lbph and svi for some chosen $\lambda\in\mathbb{R}_{\geq 0}$. Following \cite{hastie2009elements}, we first standardize the feature columns, then add an extra column of ones to represent the intercept; we distribute the data evenly to 7 local clients. To compare with the result in \cite{hastie2009elements}, we set the sparsity level to 5 (so that we include one for the intercept). At each federated round, 3 local clients participate and each of them runs 20 iterations using the local data; after receiving the local updates, the server runs 20 iterations of RK and then apply the hard thresholding operator $T_5$. We performed 2000 federated rounds, and counted the number of times each feature has a non-zero coefficient after hard thresholding. The result is recorded in Figure~\ref{experimentsfig4}, and the distribution is consistent with the analogous selection of Lasso if we look at the top 5 features.

\begin{figure}[ht]
    \centering
    \includegraphics[width=119mm]{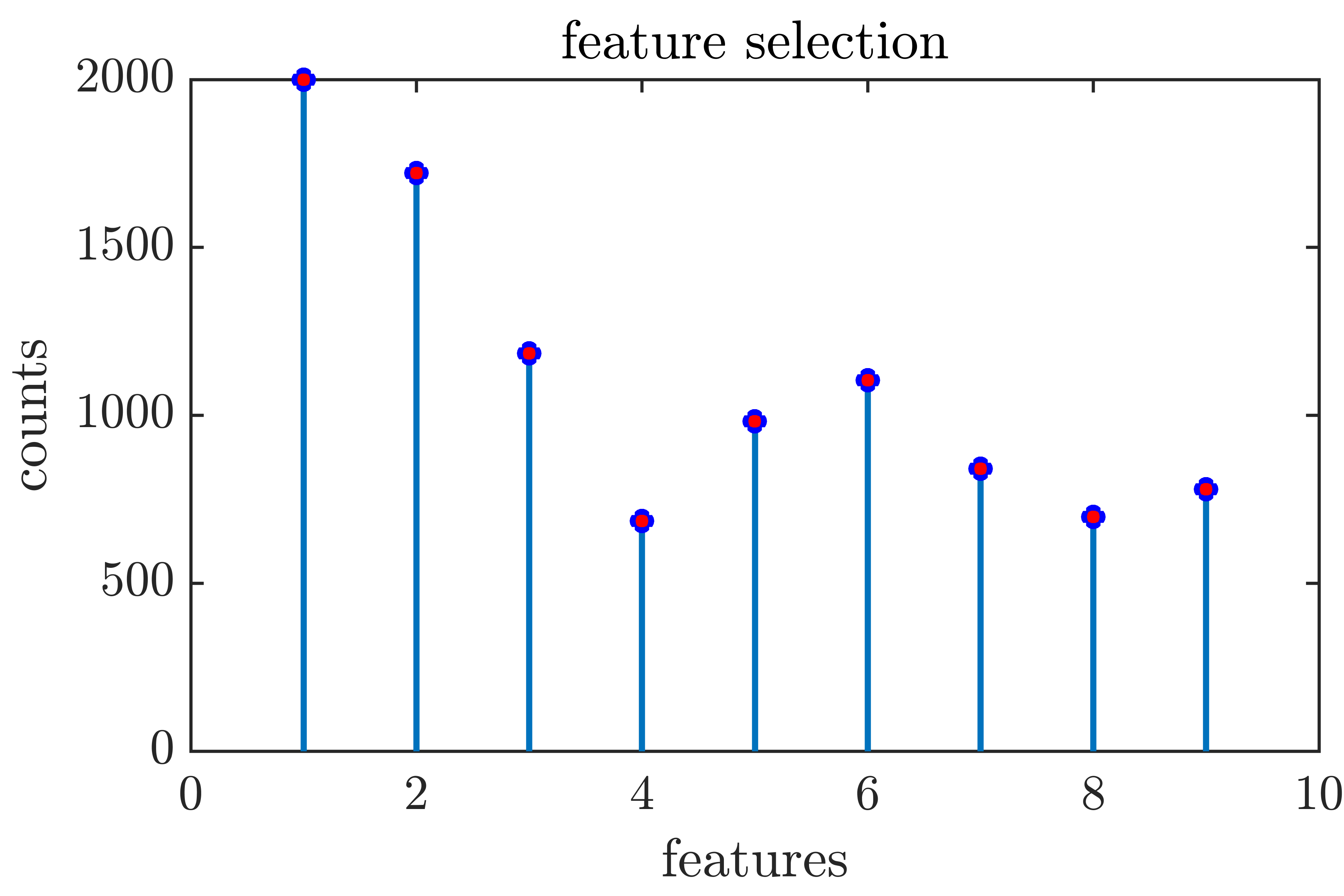}
    \caption{feature selection via Algorithm~\ref{algorithm3}}
    \label{experimentsfig4}
\end{figure}

\section{Discussion}\label{discussion}
In this paper, we proposed a federated algorithm (Algorithm~\ref{algorithm1}) for solving large linear systems and derived its convergence property. When applied to inconsistent systems, our experiments showed that it converges to a horizon of the least squares solution. We also proposed a modified version of our algorithm (Algorithm~\ref{algorithm3}) for sparse approximation problems. We applied Algorithm~\ref{algorithm3} to real data and showed that it has potential use for feature selection in the federated setting. For future work, it would be interesting to extend the approach here to more general optimization problems.  

\bmhead{Acknowledgements}
DN was partially supported by NSF DMS 2408912.

\bibliography{sn-bibliography}


\begin{thebibliography}{19}
\ifx \bisbn   \undefined \def \bisbn  #1{ISBN #1}\fi
\ifx \binits  \undefined \def \binits#1{#1}\fi
\ifx \bauthor  \undefined \def \bauthor#1{#1}\fi
\ifx \batitle  \undefined \def \batitle#1{#1}\fi
\ifx \bjtitle  \undefined \def \bjtitle#1{#1}\fi
\ifx \bvolume  \undefined \def \bvolume#1{\textbf{#1}}\fi
\ifx \byear  \undefined \def \byear#1{#1}\fi
\ifx \bissue  \undefined \def \bissue#1{#1}\fi
\ifx \bfpage  \undefined \def \bfpage#1{#1}\fi
\ifx \blpage  \undefined \def \blpage #1{#1}\fi
\ifx \burl  \undefined \def \burl#1{\textsf{#1}}\fi
\ifx \doiurl  \undefined \def \doiurl#1{\url{https://doi.org/#1}}\fi
\ifx \betal  \undefined \def \betal{\textit{et al.}}\fi
\ifx \binstitute  \undefined \def \binstitute#1{#1}\fi
\ifx \binstitutionaled  \undefined \def \binstitutionaled#1{#1}\fi
\ifx \bctitle  \undefined \def \bctitle#1{#1}\fi
\ifx \beditor  \undefined \def \beditor#1{#1}\fi
\ifx \bpublisher  \undefined \def \bpublisher#1{#1}\fi
\ifx \bbtitle  \undefined \def \bbtitle#1{#1}\fi
\ifx \bedition  \undefined \def \bedition#1{#1}\fi
\ifx \bseriesno  \undefined \def \bseriesno#1{#1}\fi
\ifx \blocation  \undefined \def \blocation#1{#1}\fi
\ifx \bsertitle  \undefined \def \bsertitle#1{#1}\fi
\ifx \bsnm \undefined \def \bsnm#1{#1}\fi
\ifx \bsuffix \undefined \def \bsuffix#1{#1}\fi
\ifx \bparticle \undefined \def \bparticle#1{#1}\fi
\ifx \barticle \undefined \def \barticle#1{#1}\fi
\bibcommenthead
\ifx \bconfdate \undefined \def \bconfdate #1{#1}\fi
\ifx \botherref \undefined \def \botherref #1{#1}\fi
\ifx \url \undefined \def \url#1{\textsf{#1}}\fi
\ifx \bchapter \undefined \def \bchapter#1{#1}\fi
\ifx \bbook \undefined \def \bbook#1{#1}\fi
\ifx \bcomment \undefined \def \bcomment#1{#1}\fi
\ifx \oauthor \undefined \def \oauthor#1{#1}\fi
\ifx \citeauthoryear \undefined \def \citeauthoryear#1{#1}\fi
\ifx \endbibitem  \undefined \def \endbibitem {}\fi
\ifx \bconflocation  \undefined \def \bconflocation#1{#1}\fi
\ifx \arxivurl  \undefined \def \arxivurl#1{\textsf{#1}}\fi
\csname PreBibitemsHook\endcsname

\bibitem[\protect\citeauthoryear{McMahan
  et~al.}{2017}]{mcmahan2017communication}
\begin{bchapter}
\bauthor{\bsnm{McMahan}, \binits{B.}},
\bauthor{\bsnm{Moore}, \binits{E.}},
\bauthor{\bsnm{Ramage}, \binits{D.}},
\bauthor{\bsnm{Hampson}, \binits{S.}},
\bauthor{\bsnm{Arcas}, \binits{B.A.}}:
\bctitle{Communication-efficient learning of deep networks from decentralized
  data}.
In: \bbtitle{Artificial Intelligence and Statistics},
pp. \bfpage{1273}--\blpage{1282}
(\byear{2017}).
\bcomment{PMLR}
\end{bchapter}
\endbibitem

\bibitem[\protect\citeauthoryear{Wang et~al.}{2021}]{WETAL21}
\begin{botherref}
\oauthor{\bsnm{Wang}, \binits{J.}},
\oauthor{\bsnm{Charles}, \binits{Z.}},
\oauthor{\bsnm{Xu}, \binits{Z.}},
\oauthor{\bsnm{Joshi}, \binits{G.}},
\oauthor{\bsnm{McMahan}, \binits{H.B.}},
\oauthor{\bsnm{Al-Shedivat}, \binits{M.}},
\oauthor{\bsnm{Andrew}, \binits{G.}},
\oauthor{\bsnm{Avestimehr}, \binits{S.}},
\oauthor{\bsnm{Daly}, \binits{K.}},
\oauthor{\bsnm{Data}, \binits{D.}}, et al.:
A field guide to federated optimization.
arXiv preprint arXiv:2107.06917
(2021)
\end{botherref}
\endbibitem

\bibitem[\protect\citeauthoryear{Karczmarz}{1937}]{karczmarz1937angenaherte}
\begin{botherref}
\oauthor{\bsnm{Karczmarz}, \binits{S.}}:
Angenaherte auflosung von systemen linearer glei-chungen.
Bull. Int. Acad. Pol. Sic. Let., Cl. Sci. Math. Nat.,
355--357
(1937)
\end{botherref}
\endbibitem

\bibitem[\protect\citeauthoryear{Strohmer and
  Vershynin}{2009}]{strohmer2009randomized}
\begin{barticle}
\bauthor{\bsnm{Strohmer}, \binits{T.}},
\bauthor{\bsnm{Vershynin}, \binits{R.}}:
\batitle{A randomized kaczmarz algorithm with exponential convergence}.
\bjtitle{Journal of Fourier Analysis and Applications}
\bvolume{15}(\bissue{2}),
\bfpage{262}--\blpage{278}
(\byear{2009})
\end{barticle}
\endbibitem

\bibitem[\protect\citeauthoryear{Needell et~al.}{2014}]{needell2014stochastic}
\begin{botherref}
\oauthor{\bsnm{Needell}, \binits{D.}},
\oauthor{\bsnm{Ward}, \binits{R.}},
\oauthor{\bsnm{Srebro}, \binits{N.}}:
Stochastic gradient descent, weighted sampling, and the randomized kaczmarz
  algorithm.
Advances in neural information processing systems
\textbf{27}
(2014)
\end{botherref}
\endbibitem

\bibitem[\protect\citeauthoryear{Ma et~al.}{2015}]{ma2015convergence}
\begin{barticle}
\bauthor{\bsnm{Ma}, \binits{A.}},
\bauthor{\bsnm{Needell}, \binits{D.}},
\bauthor{\bsnm{Ramdas}, \binits{A.}}:
\batitle{Convergence properties of the randomized extended gauss--seidel and
  kaczmarz methods}.
\bjtitle{SIAM Journal on Matrix Analysis and Applications}
\bvolume{36}(\bissue{4}),
\bfpage{1590}--\blpage{1604}
(\byear{2015})
\end{barticle}
\endbibitem

\bibitem[\protect\citeauthoryear{Jeong and Needell}{2023}]{jeong2023linear}
\begin{botherref}
\oauthor{\bsnm{Jeong}, \binits{H.}},
\oauthor{\bsnm{Needell}, \binits{D.}}:
Linear convergence of reshuffling kaczmarz methods with sparse constraints.
arXiv preprint arXiv:2304.10123
(2023)
\end{botherref}
\endbibitem

\bibitem[\protect\citeauthoryear{Zhang et~al.}{2015}]{zhang2015iterative}
\begin{barticle}
\bauthor{\bsnm{Zhang}, \binits{Z.}},
\bauthor{\bsnm{Yu}, \binits{Y.}},
\bauthor{\bsnm{Zhao}, \binits{S.}}:
\batitle{Iterative hard thresholding based on randomized kaczmarz method}.
\bjtitle{Circuits, Systems, and Signal Processing}
\bvolume{34},
\bfpage{2065}--\blpage{2075}
(\byear{2015})
\end{barticle}
\endbibitem

\bibitem[\protect\citeauthoryear{Needell}{2010}]{needell2010randomized}
\begin{barticle}
\bauthor{\bsnm{Needell}, \binits{D.}}:
\batitle{Randomized kaczmarz solver for noisy linear systems}.
\bjtitle{BIT Numerical Mathematics}
\bvolume{50},
\bfpage{395}--\blpage{403}
(\byear{2010})
\end{barticle}
\endbibitem

\bibitem[\protect\citeauthoryear{Tibshirani}{1996}]{tibshirani1996regression}
\begin{barticle}
\bauthor{\bsnm{Tibshirani}, \binits{R.}}:
\batitle{Regression shrinkage and selection via the lasso}.
\bjtitle{Journal of the Royal Statistical Society Series B: Statistical
  Methodology}
\bvolume{58}(\bissue{1}),
\bfpage{267}--\blpage{288}
(\byear{1996})
\end{barticle}
\endbibitem

\bibitem[\protect\citeauthoryear{Munkres}{2014}]{munkrestopology}
\begin{bbook}
\bauthor{\bsnm{Munkres}, \binits{J.}}:
\bbtitle{Topology (2nd Edition)}.
\bpublisher{Prentice-Hall},
\blocation{NJ}
(\byear{2014})
\end{bbook}
\endbibitem

\bibitem[\protect\citeauthoryear{Spivak}{1999}]{spivakgeometry}
\begin{bbook}
\bauthor{\bsnm{Spivak}, \binits{M.}}:
\bbtitle{A Comprehensive Introduction to Differential Geometry}.
\bpublisher{Publish or Perish, Inc.},
\blocation{TX}
(\byear{1999})
\end{bbook}
\endbibitem

\bibitem[\protect\citeauthoryear{Huang et~al.}{2024}]{huang2024randomized}
\begin{barticle}
\bauthor{\bsnm{Huang}, \binits{L.}},
\bauthor{\bsnm{Li}, \binits{X.}},
\bauthor{\bsnm{Needell}, \binits{D.}}:
\batitle{Randomized kaczmarz in adversarial distributed setting}.
\bjtitle{SIAM Journal on Scientific Computing}
\bvolume{46}(\bissue{3}),
\bfpage{354}--\blpage{376}
(\byear{2024})
\end{barticle}
\endbibitem

\bibitem[\protect\citeauthoryear{Needell and Tropp}{2009}]{needell2009cosamp}
\begin{barticle}
\bauthor{\bsnm{Needell}, \binits{D.}},
\bauthor{\bsnm{Tropp}, \binits{J.A.}}:
\batitle{Cosamp: Iterative signal recovery from incomplete and inaccurate
  samples}.
\bjtitle{Applied and computational harmonic analysis}
\bvolume{26}(\bissue{3}),
\bfpage{301}--\blpage{321}
(\byear{2009})
\end{barticle}
\endbibitem

\bibitem[\protect\citeauthoryear{Blumensath and
  Davies}{2008}]{blumensath2008iterative}
\begin{barticle}
\bauthor{\bsnm{Blumensath}, \binits{T.}},
\bauthor{\bsnm{Davies}, \binits{M.E.}}:
\batitle{Iterative thresholding for sparse approximations}.
\bjtitle{Journal of Fourier analysis and Applications}
\bvolume{14},
\bfpage{629}--\blpage{654}
(\byear{2008})
\end{barticle}
\endbibitem

\bibitem[\protect\citeauthoryear{Zouzias and
  Freris}{2013}]{zouzias2013randomized}
\begin{barticle}
\bauthor{\bsnm{Zouzias}, \binits{A.}},
\bauthor{\bsnm{Freris}, \binits{N.M.}}:
\batitle{Randomized extended kaczmarz for solving least squares}.
\bjtitle{SIAM Journal on Matrix Analysis and Applications}
\bvolume{34}(\bissue{2}),
\bfpage{773}--\blpage{793}
(\byear{2013})
\end{barticle}
\endbibitem

\bibitem[\protect\citeauthoryear{Epperly et~al.}{2024}]{epperly2024randomized}
\begin{botherref}
\oauthor{\bsnm{Epperly}, \binits{E.N.}},
\oauthor{\bsnm{Goldshlager}, \binits{G.}},
\oauthor{\bsnm{Webber}, \binits{R.J.}}:
Randomized kaczmarz with tail averaging.
arXiv preprint arXiv:2411.19877
(2024)
\end{botherref}
\endbibitem

\bibitem[\protect\citeauthoryear{Stamey et~al.}{1989}]{stamey1989prostate}
\begin{barticle}
\bauthor{\bsnm{Stamey}, \binits{T.A.}},
\bauthor{\bsnm{Kabalin}, \binits{J.N.}},
\bauthor{\bsnm{McNeal}, \binits{J.E.}},
\bauthor{\bsnm{Johnstone}, \binits{I.M.}},
\bauthor{\bsnm{Freiha}, \binits{F.}},
\bauthor{\bsnm{Redwine}, \binits{E.A.}},
\bauthor{\bsnm{Yang}, \binits{N.}}:
\batitle{Prostate specific antigen in the diagnosis and treatment of
  adenocarcinoma of the prostate. ii. radical prostatectomy treated patients}.
\bjtitle{The Journal of urology}
\bvolume{141}(\bissue{5}),
\bfpage{1076}--\blpage{1083}
(\byear{1989})
\end{barticle}
\endbibitem

\bibitem[\protect\citeauthoryear{Hastie et~al.}{2009}]{hastie2009elements}
\begin{bbook}
\bauthor{\bsnm{Hastie}, \binits{T.}},
\bauthor{\bsnm{Tibshirani}, \binits{R.}},
\bauthor{\bsnm{Friedman}, \binits{J.H.}},
\bauthor{\bsnm{Friedman}, \binits{J.H.}}:
\bbtitle{The Elements of Statistical Learning: Data Mining, Inference, and
  Prediction}.
\bpublisher{Springer},
\blocation{NY}
(\byear{2009})
\end{bbook}
\endbibitem

\end{thebibliography}

\end{document}